\long\def\eatit#1{}
\newtheorem{theorem}{Theorem}[section]
\newtheorem{proposition}[theorem]{Proposition}
\newtheorem{lemma}[theorem]{Lemma}
\newtheorem{corollary}[theorem]{Corollary}
\newtheorem{conjecture}[theorem]{Conjecture}
\newtheorem*{theorem*}{Theorem}
\newtheorem{assump}{Theorem}
\theoremstyle{definition}
\newtheorem{prob*}{Problem}
\newtheorem{question}[theorem]{Question}
\newtheorem{definition}[theorem]{Definition}
\newtheorem{example}[theorem]{Example}
\newtheorem{remark}[theorem]{Remark}
\newtheorem{thevarthm}[theorem]{\varthmname}
\newtheorem{thmx}{Theorem}
\newenvironment{varthm*}[1]{\trivlist\item[]{\bf #1.}\it}{\endtrivlist}
\newcommand{\coker}{ \ensuremath{coker}}
\newcommand{\PP}{ \ensuremath{\mathbb{P}}}
\thanks{\hspace{-15pt} 
{\it Acknowledgment.} Favacchio thanks the University of Notre Dame for its hospitality and the support of  the Università degli studi di Palermo “Piano straordinario per il miglioramento della qualità della ricerca e dei risultati della VQR
2020-2024 - Misura A” and GNSAGA-INdAM. 
Migliore was partially supported by Simons Foundation grant \#839618.
\
This work was begun during the Preliminary School for the  conference on Lefschetz Properties in Algebra, Geometry, Topology and Combinatorics. This school took place in Krak\'ow on May 5-11, 2024, and the  authors are grateful for the kindness and support of the organizers.}
\begin{document}

\begin{abstract}
Ideals $I\subseteq R=k[\mathbb P^n]$ generated by powers of linear forms arise, via Macaulay duality, from sets of fat points $X\subseteq \mathbb P^n$. Properties of $R/I$ are connected to the geometry of the corresponding fat points. When the linear forms are general, many authors have studied the question of whether or not $R/I$ has the Weak Lefschetz Property (WLP). We study this question instead for ideals coming from a family of sets of points called grids. We give a complete  answer in the case of uniform powers of linear forms coming from square grids, and we give a conjecture and approach for the case of nonsquare grids. In the cases where WLP holds, we also describe the non-Lefschetz locus.

\end{abstract}

\keywords{ Lefschetz property, grid, non-Lefschetz locus, complete intersection, fat points}

\title[WLP for powers of linear forms]{On the Weak Lefschetz Property for certain ideals generated by powers of linear forms}

\date{June 7, 2024}
\setcounter{tocdepth}{2} 

\author[G.~Favacchio]{Giuseppe Favacchio}
\address[G.~Favacchio]{Dipartimento di Ingegneria, Universit\`a degli studi di Palermo,
Viale delle Scienze,  90128 Palermo, Italy}
\email{giuseppe.favacchio@unipa.it}

\author[J.~Migliore]{Juan Migliore} 
\address[J.~Migliore]{Department of Mathematics,
University of Notre Dame,
Notre Dame, IN 46556 USA}
\email{migliore.1@nd.edu}

\maketitle

\section{Introduction}

Many papers have studied the question of the Weak Lefschetz Property (WLP) or Strong Lefschetz Property (SLP) (or their failure) for quotients of polynomial rings by ideals generated by powers of linear forms. A partial list is \cite{BL}, \cite{POLITUS1}, \cite{POLITUS4}, \cite{HSS}, \cite{HMNT}, \cite{MM}, \cite{MMN}, \cite{MN}, \cite{MR}, \cite{MT}, \cite{NT}, \cite{SS1} and \cite{SS2}.

Almost all of these papers have used inverse systems and the connection to ideals of fat points in different ways, and almost all have focused on ideals generated by powers of {\it general} linear forms (often with uniform powers). Many also restricted to the situation of {\it almost complete intersections}, i.e. the case where the number of minimal generators is one more than the number of variables.

The paper \cite{SS1} showed that in a polynomial ring with 3 variables, an {\it arbitrary} such algebra has the WLP. \cite{BL} settled a conjecture of \cite{MMN}. \cite{MR} settled a conjecture of \cite{HSS}.
The paper \cite{HMNT} formally made the connection to so-called {\it unexpected hypersurfaces}  (see \cite[Proposition 2.17]{HMNT}), and \cite{POLITUS1} connected it to the geproci property.

Recall from \cite{POLITUS1} that a set of reduced points $X$ in $\mathbb P^3$ is  {\it $(a,b)$-geproci} if its general projection to $\mathbb P^2$ is a complete intersection of type $(a,b)$. In particular, a special type of $(a,b)$-geproci set is an $a \times b$ grid, i.e.  a set of points lying on a smooth quadric surface and given by the intersection of $a$ lines in one ruling and $b$ lines in the other. 

In this  paper we  continue the study of ideals coming from  sets of points with interesting geometry. We start with a grid $X$, and use the fact that a general projection is a complete intersection to derive information about the Weak Lefschetz Property for quotients of ideals generated by any power of the linear forms dual to the points of $X$. These are, of course, far from general forms.
 Let $\Lambda_{X,d}= (\ell_{i,j}^d\ |\ 1\le i,j\le a )$ be the ideal generated by the $d$-th powers of the linear forms dual to the points in the set $X$.  
 From \cite[Corollary 2.30]{POLITUS1}, which is a consequence of \cite[Theorem 3.5]{CM}, it is known that: {\it If $X$ is an $a\times b$ grid with $b \geq a \geq 2$ and $b \geq 3$ then $R/\Lambda_{X,a}$ fails the WLP from degree $a-1$ to degree $a$, and if $b \geq a \geq 3$ then $R/\Lambda_{X,b}$ fails the WLP from degree $b - 1$ to degree $b$.}

In Section \ref{sec. Gor}, we show that given an $a\times b$ grid $X$ and $d\le a-1$, then $R/\Lambda_{X,d}$ is a compressed Gorenstein algebra of even socle degree, 
Theorem \ref{l. in a grid is enough}. Thus, as a consequence, we get that it has the WLP. 
In Section \ref{sec. coker geproci} and Section \ref{sec. hom inv grids} we prove some preparatory results that we will apply in the study of the WLP of $R/\Lambda_{X,d}$ for larger $d$. In particular, in Lemma \ref{l. cokernel (a,b)-geproci}, given $X$ an $(a,b)$-geproci and $\ell$  a general linear form,
we compute the dimension of the cokernel of the map $\times \ell :A_{d+t-1}\to A_{d+t}$ for $t\ge 0$. In Corollary \ref{c. no syzygies in low degrees} we show that $R/\Lambda_{X,d}$ has no syzygies in low degrees. 
The main results of Section~\ref{sec. WLP square grids} are summarized in the following statement.

\begin{thmx}
      Let $X$ be an $a\times a$ grid, $a\ge 3,$  and let $I$ be the ideal generated by $\ell^d$ for each $\ell^{\vee}\in X$. Then 
$R/I$ has the WLP if and only if 
        either $d\le a-1$  [Corollary \ref{low powers, square grid}] or  $a-1$ divides $d$
         [Theorem \ref{t. WLP fails} and  Theorem \ref{t. WLP holds}].
\end{thmx}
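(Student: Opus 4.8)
The plan is to verify the Weak Lefschetz Property through the usual maximal-rank criterion: writing $A=R/I$ with $R=k[\mathbb P^3]$, the algebra $A$ has the WLP precisely when, for a general linear form $\ell$, the multiplication map $\times\ell\colon A_{j-1}\to A_j$ has maximal rank (is injective or surjective) in every degree $j$. Since an $a\times a$ grid is an $(a,a)$-geproci set, I can feed this situation directly into Lemma~\ref{l. cokernel (a,b)-geproci}, which produces the exact value of $\dim\coker(\times\ell\colon A_{d+t-1}\to A_{d+t})$ for every $t\ge 0$. The first case, $d\le a-1$, needs no computation: by Theorem~\ref{l. in a grid is enough} the algebra $A$ is a compressed Gorenstein algebra of even socle degree, and WLP then follows as recorded in Corollary~\ref{low powers, square grid}. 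So the entire content lies in the range $d\ge a$.

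For $d\ge a$ I would first exploit the fact that the general projection of the grid is a complete intersection of type $(a,a)$ in $\mathbb P^2$, whose artinian reduction has the symmetric $h$-vector $(1,2,\dots,a-1,a,a-1,\dots,2,1)$; the relevant invariants of $A$ in degrees $\ge d$ are governed by this complete intersection together with the shift by $d$. Comparing $\dim\coker(\times\ell\colon A_{j-1}\to A_j)$ with the target value $\max\{0,\dim A_j-\dim A_{j-1}\}$ reduces the question in each degree to whether a certain binomial/floor expression coming from Lemma~\ref{l. cokernel (a,b)-geproci} meets the complete-intersection data exactly. The divisibility hypothesis $(a-1)\mid d$ is exactly the arithmetic condition under which these two quantities coincide in the critical degrees: when $a-1$ divides $d$ the shifted complete-intersection data lands squarely on the expected profile and the map attains maximal rank everywhere (Theorem~\ref{t. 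WLP holds}), whereas when $a-1\nmid d$ there is a degree in which the cokernel strictly exceeds $\max\{0,\dim A_j-\dim A_{j-1}\}$, so maximal rank is lost (Theorem~\ref{t. WLP fails}). The low-degree part of the range is not an issue: Corollary~\ref{c. no syzygies in low degrees} guarantees that $R/\Lambda_{X,d}$ has no syzygies in low degrees, which forces $\times\ell$ to have maximal rank in the low-degree range, so only the degrees near and past the peak need to be examined.

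The main obstacle is the analysis of these transition degrees. Establishing failure is the easier half: it suffices to exhibit a single degree $j$ with $\dim\coker(\times\ell)>\max\{0,\dim A_j-\dim A_{j-1}\}$, and the cokernel formula makes such a degree visible whenever $(a-1)\nmid d$. Establishing that WLP holds when $(a-1)\mid d$ is more delicate, since one must rule out a rank drop in every degree $d+t$ simultaneously; here the symmetry of the underlying complete-intersection $h$-vector, together with the balanced $(a,a)$ shape of the grid, can be used to pair degrees on either side of the peak and reduce the infinitely many conditions to a finite check, with the divisibility condition ensuring the boundary case is exactly balanced rather than deficient. Carrying out this matching, degree by degree, between the output of Lemma~\ref{l. cokernel (a,b)-geproci} and the complete-intersection profile is where the real work sits, and it is precisely what Theorems~\ref{t. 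WLP holds} and~\ref{t. WLP fails} accomplish.
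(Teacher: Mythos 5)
Your overall architecture is the paper's: split off $d\le a-1$ via the compressed Gorenstein structure (Corollary \ref{low powers, square grid}), and for $d\ge a$ write $d=(a-1)q+r$, use Lemma \ref{l. cokernel (a,b)-geproci} to get the exact cokernel dimension $(q+1)\binom{r+1}{2}$ of $\times\ell\colon A_{d+q-2}\to A_{d+q-1}$, and compare it with $\Delta h_A(d+q-1)$; the strict inequality $\Delta h_A(d+q-1)<(q+1)\binom{r+1}{2}$ kills injectivity exactly when $r>0$, which is Theorem \ref{t. WLP fails}. That half of your plan is sound and is exactly what the paper does.

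The gap is in the mechanism you propose for the direction $(a-1)\mid d$, where maximal rank must be verified in \emph{every} degree. You suggest using ``the symmetry of the underlying complete-intersection $h$-vector'' to pair degrees on either side of the peak. But for $d\ge a$ the algebra $A=R/\Lambda_{X,d}$ is not Gorenstein (it has $a^2$ generators in degree $d$ and agrees with $R$ below degree $d$), so its Hilbert function has no symmetry to exploit, and the symmetry of the $h$-vector of the projected complete intersection does not transfer to $A$. Likewise, ``no syzygies in low degrees'' does not by itself force $\times\ell$ to have maximal rank there. The reduction to a finite check that the paper actually uses is the propagation argument of Remark \ref{r. surjectivity} and its dual: when $r=0$, Lemma \ref{l. cokernel (a,b)-geproci} gives $\dim\coker(\times\ell)=0$ in degree $d+q-1$, and surjectivity persists in all higher degrees because $I$ has no generators beyond degree $d$; separately, a second cokernel computation shows $\times\ell\colon A_{d+q-3}\to A_{d+q-2}$ has cokernel of exactly the expected dimension $q\binom{a}{2}$, hence is injective, and injectivity propagates \emph{downward} because part (c) of Corollary \ref{c. no syzygies in low degrees} says $A$ has no socle in degrees $\le d+q-2$, so the $k$-dual of $A$ has no generators in the corresponding range and Remark \ref{r. surjectivity} applies to it. Replacing your symmetry-pairing step with this no-socle/duality argument closes the proof; as written, the claim that only finitely many degrees need checking is unjustified.
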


\noindent In Section \ref{sec: non-Lefschetz locus} we find the non-Lefschetz locus for the algebras $R/\Lambda_{X,d}$ which according to Theorem A have the WLP (conjecturally they are all such algebras with WLP). 
In Section~\ref{sec. conj non square grids} we pose some questions and conjectures coming from this paper. 

\section{Background and basic results}

Let $R = k[x_1,\dots,x_4]$ be the homogeneous polynomial ring in four variables with the standard grading, where $k$ has characteristic zero.

\begin{definition}
    Let $A$ be an artinian graded $k$-algebra. Let $\ell$ be a general linear form. We say that $A$ has the {\it Weak Lefschetz Property (WLP)} if $\times \ell : [A]_{t-1} \rightarrow [A]_t$ has maximal rank for all $t$, i.e., it is either injective or surjective.
\end{definition}

The following result is standard, and is an important tool in our  computations.

\begin{lemma} \label{pwrs of ci}
    Let $I \subset R = k[\mathbb P^2]$ be a complete intersection.  Then the powers $I^m$ are equal to the symbolic powers $I^{(m)}$ for all $m$. Furthermore, if $I$ has minimal free resolution 
    \[
    0 \rightarrow \mathbb F \rightarrow \mathbb G \rightarrow I \rightarrow 0
    \]
    (with $\hbox{rk } \mathbb F = 1$ and $\hbox{rk } \mathbb G = 2$) then $I^m$ has minimal free resolution
    \[
    0 \rightarrow \mathbb F \otimes \hbox{Sym}^{m-1} \mathbb G \rightarrow \hbox{Sym}^m \mathbb G \rightarrow I^m \rightarrow 0.
    \]
\end{lemma}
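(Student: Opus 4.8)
The plan is to prove the two assertions in the opposite order from which they are stated: I would first produce the minimal free resolution of $I^m$, and then deduce the equality $I^m = I^{(m)}$ from it. Since $I\subset R=k[\mathbb P^2]$ is a codimension-two complete intersection (the shape of the given resolution, with $\operatorname{rk}\mathbb F=1$ and $\operatorname{rk}\mathbb G=2$, forces this), it is generated by a regular sequence $f,g$ of forms of degrees $d_1,d_2$; the displayed resolution is the Koszul complex
$$0\to \mathbb F=R(-d_1-d_2)\xrightarrow{\ g e_1-f e_2\ }\mathbb G=R(-d_1)\oplus R(-d_2)\xrightarrow{\ (f\ g)\ } I\to 0,$$
and $f,g$ share no common factor in the UFD $R$ (a common irreducible factor would make $g$ a zerodivisor modulo $f$).

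For the resolution of $I^m$ I would pass to the Rees algebra $\mathcal R(I)=\bigoplus_{m\ge 0}I^m\subseteq R[t]$ and exploit the fact that a regular sequence is of \emph{linear type}. Writing $T_1,T_2$ for variables recording the generators $e_1,e_2$ (with internal degrees $d_1,d_2$), there is a natural surjection $R[T_1,T_2]\to\mathcal R(I)$, $T_i\mapsto$ (generator)$\cdot t$, whose kernel contains the single Koszul relation $gT_1-fT_2$. The key point is that it contains nothing more: since $f,g$ are coprime the form $gT_1-fT_2$ is irreducible, so $(gT_1-fT_2)$ is prime and $R[T_1,T_2]/(gT_1-fT_2)$ is a domain of dimension $\dim R+1$; as $\mathcal R(I)$ is also a domain of dimension $\dim R+1$, the induced surjection between them must be an isomorphism. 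Thus
$$0\to R[T_1,T_2]\xrightarrow{\ \cdot(gT_1-fT_2)\ }R[T_1,T_2]\to\mathcal R(I)\to 0$$
is a free resolution, and extracting its component of Rees-degree $m$ identifies the two middle terms with $\operatorname{Sym}^m\mathbb G$ and $\operatorname{Sym}^{m-1}\mathbb G$. Multiplication by $gT_1-fT_2$ raises internal degree by $d_1+d_2$, so the source becomes $\operatorname{Sym}^{m-1}\mathbb G(-d_1-d_2)=\mathbb F\otimes\operatorname{Sym}^{m-1}\mathbb G$, giving precisely the asserted complex. It is minimal because the matrix entries are the positive-degree forms $f$ and $g$.

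The equality of ordinary and symbolic powers is then immediate. The resolution just produced has length one, so $\operatorname{pd}_R(R/I^m)=2$; Auslander--Buchsbaum gives $\operatorname{depth}(R/I^m)=\dim R-2=1=\dim(R/I^m)$, so $R/I^m$ is Cohen--Macaulay and in particular unmixed. Hence $I^m$ has no embedded primes, $\operatorname{Ass}(R/I^m)=\operatorname{Min}(I)$, and therefore $I^m=I^{(m)}$. Alternatively one obtains the Cohen--Macaulayness directly from the classical isomorphism $\operatorname{gr}_I(R)\cong (R/I)[T_1,T_2]$, which shows each $I^j/I^{j+1}$ is $R/I$-free.

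The one genuinely nontrivial step is the linear-type claim, i.e.\ that the kernel of $R[T_1,T_2]\to\mathcal R(I)$ is generated by the single Koszul relation; this is where I expect the real content to lie. I handle it above by the irreducibility/dimension argument, which is self-contained and specific to the two-generator case. For more generators one would instead invoke the theory of $d$-sequences, and the resolution would be a longer Eagon--Northcott-type complex rather than the short one available here in codimension two.
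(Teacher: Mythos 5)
Your proof is correct, and it is worth noting at the outset that the paper does not actually prove this lemma: its ``proof'' is the citation to \cite{codim2}, Lemma 2.8 and Theorem 2.9, so what you have written is a self-contained derivation of the standard facts being quoted rather than an alternative to an argument in the paper. Your route is essentially the classical one underlying those references: a length-two regular sequence is of linear type, so the Rees algebra is $R[T_1,T_2]/(gT_1-fT_2)$, and the resolution of $I^m$ is the $T$-degree-$m$ strand of the evident resolution of that hypersurface ring. Each step checks out. The irreducibility of $gT_1-fT_2$ is right because a nonunit factor would have to lie in $R$ and divide both $f$ and $g$, contradicting regularity of the sequence in the UFD $R$; the surjection of affine domains of equal dimension is an isomorphism because affine domains are catenary and equidimensional, so a nonzero prime kernel would drop the dimension; the identification of the strand $\bigoplus_{i+j=m}R(-id_1-jd_2)$ with $\operatorname{Sym}^m\mathbb G$ and of the twist by $-(d_1+d_2)$ with tensoring by $\mathbb F$ is correct; minimality holds since the syzygy matrix has entries $\pm f$, $\pm g$, $0$; and the deduction $\operatorname{pd}(R/I^m)=2\Rightarrow\operatorname{depth}(R/I^m)=1=\dim(R/I^m)\Rightarrow I^m$ unmixed $\Rightarrow I^m=I^{(m)}$ is the standard Auslander--Buchsbaum/Cohen--Macaulay argument (this is precisely the mechanism behind the codimension-two Cohen--Macaulay results of \cite{codim2}). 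Your only implicit assumption is that the complete intersection has codimension two, which, as you note, is forced by the shape of the given resolution; the codimension $1$ and $3$ cases of the first assertion are trivial anyway. What your approach buys over the paper's is self-containedness; what the citation buys is generality, since \cite{codim2} treats arbitrary codimension-two Cohen--Macaulay ideals, for which one would need the longer Eagon--Northcott-type complexes you allude to at the end.
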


\begin{proof}
    See for instance \cite{codim2} Lemma 2.8 and Theorem 2.9.
\end{proof}

\begin{lemma}
    Let $X$ be a set of $t$ collinear points in $\mathbb P^n$, $X = \{ P_{1},\dots,P_{t} \}$, and let $\ell_{i}$ be the linear form dual to $P_{i}$ for each $i$. Let $d$ satisfy $ d \leq t-1$. Then 
    \[
  \dim_k  \langle \ell_{1}^d,\ldots, \ell_{t}^d \rangle =d+1. \ 
    \]
In particular any $d+1$ of the forms $\ell_i^{d}$ can be chosen as a minimal basis of the vector space spanned by all the $\ell_i^d$.  
\end{lemma}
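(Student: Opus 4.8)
The plan is to reduce the statement to a fact about binary forms, exploiting collinearity, and then to invoke the nonvanishing of a Vandermonde-type determinant.

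First I would use the collinearity of $P_1,\dots,P_t$: their homogeneous coordinate vectors span a $2$-dimensional subspace of $k^{n+1}$, so the dual linear forms $\ell_1,\dots,\ell_t$ all lie in a common $2$-dimensional subspace $W=\langle L_0,L_1\rangle\subseteq R_1$ spanned by two independent linear forms. Writing $\ell_i=\alpha_i L_0+\beta_i L_1$, I note that the distinctness of the points $P_i$ translates into distinctness of the projective points $(\alpha_i:\beta_i)\in\mathbb P^1$. Each power $\ell_i^d$ then lands in $\hbox{Sym}^d W=\langle L_0^d,L_0^{d-1}L_1,\dots,L_1^d\rangle$, a space of dimension exactly $d+1$; this gives immediately the upper bound $\dim_k\langle \ell_1^d,\dots,\ell_t^d\rangle\le d+1$.

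For the lower bound I would expand by the binomial theorem,
\[
\ell_i^d=\sum_{k=0}^d\binom{d}{k}\alpha_i^{d-k}\beta_i^k\,L_0^{d-k}L_1^k,
\]
and read off the coordinate vector of $\ell_i^d$ in the ordered basis $(L_0^{d-k}L_1^k)_{k=0}^d$. Up to the nonzero scalars $\binom{d}{k}$ (absorbed column by column), these are the rows of a homogeneous Vandermonde matrix with entries $\alpha_i^{d-k}\beta_i^k$. Choosing any $d+1$ of the points (possible since $d\le t-1$), the corresponding $(d+1)\times(d+1)$ determinant equals $\prod_{i<j}(\alpha_i\beta_j-\alpha_j\beta_i)$ up to sign, which is nonzero precisely because the points $(\alpha_i:\beta_i)$ are pairwise distinct. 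Hence any $d+1$ of the vectors are linearly independent, so the span has dimension at least $d+1$, and combined with the upper bound it equals $d+1$. The ``in particular'' statement then follows for free: any $d+1$ of the $\ell_i^d$ are linearly independent inside a space of dimension $d+1$, so each such subset is a basis, and the remaining powers are linear combinations of them.

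I expect the only point requiring care to be the identification of the coordinate matrix with a generalized Vandermonde matrix and the verification that its determinant factors as $\prod_{i<j}(\alpha_i\beta_j-\alpha_j\beta_i)$; everything else is bookkeeping. A minor subtlety is that one of the points may have $\beta_i=0$, so I would use the homogeneous form of the Vandermonde determinant rather than dehomogenizing by setting $x_i=\alpha_i/\beta_i$, thereby avoiding an artificial case distinction.
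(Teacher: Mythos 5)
Your proof is correct, but it takes a genuinely different route from the paper. The paper disposes of the statement in one line via Macaulay duality: $\dim_k\langle\ell_1^d,\dots,\ell_t^d\rangle=\dim[(\ell_1^d,\dots,\ell_t^d)]_d=\dim[R/I_X]_d$, and the Hilbert function of $t\geq d+1$ collinear points in degree $d$ is $d+1$. Your argument instead works entirely inside $R_d$ by hand: collinearity places all the $\ell_i$ in a two-dimensional subspace $W=\langle L_0,L_1\rangle$ of $R_1$, the powers $\ell_i^d$ land in the $(d+1)$-dimensional span of the $L_0^{d-k}L_1^k$ (giving the upper bound), and the homogeneous Vandermonde determinant $\prod_{i<j}(\alpha_i\beta_j-\alpha_j\beta_i)\neq 0$ gives the lower bound together with the stronger claim that \emph{any} $d+1$ of the powers are independent. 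Both proofs are complete; yours is more elementary and self-contained (no appeal to duality or to the Hilbert function of points on a line), and it yields the ``in particular'' clause directly rather than by re-running the argument on a $(d+1)$-point subset. The only external inputs you need are that the binomial coefficients $\binom{d}{k}$ are nonzero, which holds since the paper works in characteristic zero, and the factorization of the homogenized Vandermonde determinant, which you correctly flag and which follows from the usual Vandermonde identity by the substitution $x_i=\alpha_i/\beta_i$ together with a polynomial-identity (or continuity) argument to cover the case $\beta_i=0$. The paper's duality argument, on the other hand, is the one that scales to the rest of the paper, where the same mechanism is used for grids and fat points in situations where no explicit determinant is available.
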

\begin{proof}
    Consider the ideal $(\ell_1^d,\dots,\ell_t^d)$. We have using Macaulay  duality
    \[
 \dim_k  \langle \ell_{1}^d,\ldots, \ell_{t}^d \rangle   =  \dim [( \ell_{1}^d,\ldots, \ell_{t}^d )]_d=\dim [R/I_X]_d =d+1
    \]
     since $t \geq d+1$.
\end{proof}

\begin{corollary}
    Let $X$ be an $a \times b$ grid, with $a \leq b$ and $X = \{ P_{1,1},\dots,P_{a,b} \}$, and for each $i,j$ let $\ell_{i,j}$ be the linear form dual to $P_{i,j}$.  Then 

    \begin{itemize}
        \item[$(a)$] If $ a-1 \leq d \leq b-1$ then  for any  $a \times (d+1)$ subgrid $Y$ of $X$  we have an equality of ideals
    \[
    (\ell_{i,j}^d \ | \ \ell^{\vee}\in X  ) = (\ell_{i,j}^d \ | \ \ell^{\vee}\in Y ). \ 
    \]
        \item[$(b)$] If $d \leq a-1$  then  for any  $(d+1) \times (d+1)$ subgrid $Y$ of $X$  we have an equality of ideals
    \[
    (\ell_{i,j}^d \ | \ \ell^{\vee}\in X ) = (\ell_{i,j}^d \ | \  \ell^{\vee}\in Y ). \ 
    \]
    \end{itemize}

\end{corollary}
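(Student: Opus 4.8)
The plan is to exploit the defining feature of a grid: the points $P_{i,1},\dots,P_{i,b}$ in a fixed row lie on a single line of one ruling of the quadric, and the points $P_{1,j},\dots,P_{a,j}$ in a fixed column lie on a single line of the other ruling. This reduces each part to the previous lemma on collinear points, via the observation that a vector-space relation among the forms $\ell_{i,j}^d$ immediately yields ideal membership: if $\ell_{i,j}^d$ lies in the $k$-span of some of the other generators, then it lies in the ideal they generate.

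For part $(a)$, fix a subset $J\subseteq\{1,\dots,b\}$ with $|J|=d+1$ determining the $a\times(d+1)$ subgrid $Y=\{P_{i,j}\ |\ 1\le i\le a,\ j\in J\}$. For each fixed row index $i$, the $b$ points $P_{i,1},\dots,P_{i,b}$ are collinear, and since $d\le b-1$ the previous lemma applies: the space $\langle \ell_{i,1}^d,\dots,\ell_{i,b}^d\rangle$ has dimension $d+1$ and the $d+1$ forms $\{\ell_{i,j}^d\ |\ j\in J\}$ form a basis of it. Hence every $\ell_{i,j}^d$ is a $k$-linear combination of the forms $\{\ell_{i,j'}^d\ |\ j'\in J\}$, and so lies in $(\ell_{i,j}^d\ |\ \ell^{\vee}\in Y)$. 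Ranging over all rows $i$ shows every generator of the $X$-ideal lies in the $Y$-ideal, while the reverse inclusion is trivial since $Y\subseteq X$.

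For part $(b)$ one simply iterates, reducing columns and then rows. Since $d\le a-1\le b-1$, the row-reduction argument above (which used only $d\le b-1$) replaces $X$ by the $a\times(d+1)$ subgrid $X'=\{P_{i,j}\ |\ 1\le i\le a,\ j\in J\}$, of equal ideal, for the prescribed set $J$ of columns. Now apply the identical argument to columns: for each fixed $j\in J$ the $a$ points $P_{1,j},\dots,P_{a,j}$ are collinear and $d\le a-1$, so the $d+1$ forms $\{\ell_{i,j}^d\ |\ i\in I\}$ indexed by the prescribed row set $I$ form a basis of $\langle \ell_{1,j}^d,\dots,\ell_{a,j}^d\rangle$. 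Thus each $\ell_{i,j}^d$ lies in $(\ell_{i',j}^d\ |\ i'\in I)$, and the ideal of $X'$ equals that of the $(d+1)\times(d+1)$ subgrid $Y=\{P_{i,j}\ |\ i\in I,\ j\in J\}$. Composing the two reductions gives the claim.

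There is no serious obstacle here; the content lies entirely in the collinear-points lemma, and the only thing to watch is the bookkeeping ensuring that the same columns (and then rows) are discarded across every row (respectively column), so that the surviving generators form a genuine subgrid rather than an arbitrary subset.
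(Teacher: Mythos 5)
Your proof is correct and is exactly the deduction the paper intends: the corollary is stated without proof as an immediate consequence of the preceding lemma on collinear points, and your row-by-row (then column-by-column) application of that lemma, using that the same $d+1$ column indices (resp.\ row indices) can be chosen in every row (resp.\ column) so that the surviving points form a genuine subgrid, supplies precisely the missing argument.
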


\begin{remark}\label{r.P1xP1}
We recall some basic facts on bigraded rings and points in $\mathbb P^1\times \mathbb P^1$; see for instance~\cite{GV-book} for an overview on the topic.
    Let $S:=k[x_0,x_1,y_0,y_1]$ be a polynomial ring with coefficients in $k$ and set $\deg(x_i)=(1,0)$ and $\deg(y_i)=(0,1),$ for $i=0,1.$  For each $(i,j)$ non-negative integers $S_{i,j}$ denotes the vector space generated by the monomials of bi-degree $(i,j)$. 
For a bi-homogeneous ideal $I\subseteq S$ the Hilbert function of $S/I$ is the numerical function $h_{S/I}:\mathbb Z^2\to \mathbb Z$ defined by $$h_{S/I}(i,j)=\dim \left[S/I\right]_{i,j}=\dim \left[S\right]_{i,j}-\dim \left[I\right]_{i,j}.$$
The first difference of $h_{S/I}$ is defined as
$$\Delta h_{S/I}(i,j)=h_{S/I}(i,j)-h_{S/I}(i-1,j)-h_{S/I}(i,j-1)+h_{S/I}(i-1,j-1).$$

A point in $\mathbb P^1\times \mathbb P^1$ is a pair $A\times B$ where $A,B$ are both points in $\mathbb P^1$, but $A$ is defined by a linear form in $k[x_0,x_1]$ and $B$ is defined by a linear form in $k[y_0,y_1]$.
For a set of points $Z$ in $\mathbb P^1\times \mathbb P^1$ the ideal $I_Z$ is bihomogeneous. 
Many results, in analogy to the standard graded case, can be proved for reduced and non-reduced sets of points in $\mathbb P^1\times \mathbb P^1$. For a (not necessarily reduced) set of points $Z$ in $\mathbb P^1 \times \mathbb P^1$ the function $\Delta h_{Z}$ has only a finite number of nonzero entries and, in particular, if the ideal $I_Z$ is non-zero in bidegrees $(u,0)$ and $(0,v)$ then $\Delta h_{Z}(i,j)=0$ for either $i\ge u$ or $j\ge v$.  (See Section 2 in \cite{GMR} for these and more properties on $h_{Z}$.)
\end{remark}

\section{A class of Gorenstein ideals generated by powers of linear forms}\label{sec. Gor}

The results of this section are of independent interest. We study a class of Gorenstein algebras that are generated by powers of linear forms. However, as a byproduct we obtain our first result (Corollary \ref{low powers, square grid}) about WLP for certain instances of our main topic, namely the question of WLP or its failure for ideals generated by powers of linear forms that are dual to grid points on a smooth quadric. In this case, we handle the situation where the linear forms have ``small" powers.

Let $\mathcal Q$ be a smooth quadric defined by a quadratic form $Q$, and let $\ell \in R_1$. Given two forms $F,G\in R$ the symbol $\dfrac{\partial F}{\partial G}$ denotes the derivation of $F$ under the action of $G.$
We have the following preliminary lemmas. 

\begin{lemma}\label{l. P in Q}
    Let $\ell\in R_1$ and $P=\ell^{\vee}.$ Then
    $P\in \mathcal Q$  if and only if  $\dfrac{\partial  Q}{\partial \ell^2}=0$. 
\end{lemma}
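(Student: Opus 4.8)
The plan is to compute the apolarity (differential) action of $\ell^2$ on $Q$ explicitly and to recognize the result as a nonzero scalar multiple of the evaluation $Q(P)$. Choose coordinates so that $\ell = a_1 x_1 + \cdots + a_4 x_4$, so that the dual point is $P = [a_1 : \cdots : a_4]$, and write the quadratic form as $Q = x^{T} M x$ for a symmetric matrix $M = (m_{ij})$. By definition, the differential operator associated to $\ell$ is $\ell(\partial) = \sum_i a_i \partial_{x_i}$, and the operator associated to $\ell^2$ is its square $\ell(\partial)^2$, since substituting $\partial_{x_i}$ for $x_i$ in $\ell^2$ produces exactly $\bigl(\sum_i a_i \partial_{x_i}\bigr)^2$.

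First I would apply $\ell(\partial)$ once to $Q$. Using $\partial_{x_i}(x^T M x) = 2(Mx)_i$ together with the symmetry of $M$, this gives the linear form $\ell(\partial) Q = 2\, a^{T} M x$. Applying $\ell(\partial)$ a second time simply replaces $x$ by $a$ and yields the constant
\[
\frac{\partial Q}{\partial \ell^2} = \ell(\partial)^2 Q = 2\, a^{T} M a = 2\, Q(a) = 2\, Q(P).
\]

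Finally, by definition $P \in \mathcal Q$ if and only if $Q$ vanishes at $P$, i.e. $Q(P) = 0$. Since $k$ has characteristic zero, the factor $2$ is invertible, so $\frac{\partial Q}{\partial \ell^2} = 0$ is equivalent to $Q(P) = 0$, and the lemma follows.

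The computation is routine; the only point requiring care is fixing the normalization of the apolarity action (here the harmless constant $2$) and confirming it is nonzero. This is precisely where the characteristic-zero hypothesis enters: in characteristic $2$ the factor would vanish and the stated equivalence would break down. More conceptually, the lemma is the degree-two instance of the general apolarity fact that $\ell^{d}(\partial)$ acts on degree-$d$ forms as evaluation at $\ell^{\vee}$, up to the scalar $d!$; one could alternatively invoke this principle, but the direct two-line computation above is cleaner and self-contained.
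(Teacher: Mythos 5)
Your proof is correct and follows essentially the same route as the paper: both compute $\dfrac{\partial Q}{\partial \ell^2}$ directly and identify it as $2\,Q(P)$, which vanishes exactly when $P\in\mathcal Q$ (the paper writes $Q$ as a sum of monomials rather than in matrix form, but the computation is the same). Your explicit remark about the factor $2$ and characteristic zero is a nice touch that the paper leaves implicit.
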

\begin{proof}
   Let $\ell=\sum \alpha_ix_i$ and $Q=\sum_{i\le j} a_{ij}x_ix_j$. Then,
    $\dfrac{\partial Q}{\partial \ell^2}=2\sum_{i\le j}a_{ij}\alpha_i\alpha_j =0 $ if and only if $ P\in \mathcal Q.$
\end{proof}

\begin{lemma}\label{l. in a grid is enough}
    Let $G$ be an $a\times a$ grid in $\mathcal Q$. Then 
    $$(\ell^{a-1}\ |\ \ell^{\vee}\in \mathcal Q )=(\ell^{a-1}\ |\ \ell^{\vee}\in G ).$$
    In particular $\dim[(\ell^{a-1}\ |\ \ell^{\vee}\in \mathcal Q )]_{a-1}=a^2.$
\end{lemma}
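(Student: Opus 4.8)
The plan is to reduce everything to a computation of linear spans in degree $a-1$ and then to two Hilbert–function counts that both yield $a^2$. Since both ideals in the statement are generated in the single degree $a-1$, their degree-$(a-1)$ graded pieces are exactly the linear spans $W_{\mathcal Q}=\langle \ell^{a-1}\mid \ell^{\vee}\in\mathcal Q\rangle$ and $W_G=\langle \ell^{a-1}\mid \ell^{\vee}\in G\rangle$ inside $R_{a-1}$, and the two ideals coincide if and only if $W_{\mathcal Q}=W_G$. Because $G\subseteq\mathcal Q$ we automatically have $W_G\subseteq W_{\mathcal Q}$, so it suffices to prove the single numerical equality $\dim_k W_G=\dim_k W_{\mathcal Q}=a^2$; the latter equality is also precisely the ``in particular'' assertion. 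First I would record the apolarity principle already used in the lemma on collinear points above: under Macaulay duality the span of the $(a-1)$-st powers of the linear forms dual to the points of a reduced variety $Y$ is the apolar perp of $[I_Y]_{a-1}$, so that $\dim_k\langle \ell^{a-1}\mid \ell^{\vee}\in Y\rangle = h_{R/I_Y}(a-1)$. This turns both dimensions into Hilbert-function values.

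For the quadric, $I_{\mathcal Q}=(Q)$ is principal of degree $2$, so that $[(Q)]_{a-1}\cong R_{a-3}$ and hence $h_{R/(Q)}(a-1)=\binom{a+2}{3}-\binom{a}{3}$; a one-line binomial identity gives $\binom{a+2}{3}-\binom{a}{3}=a^2$. This establishes $\dim_k W_{\mathcal Q}=a^2$ and the displayed dimension statement at once.

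For the grid I would show $h_{R/I_G}(a-1)=a^2$, i.e.\ that the $a^2$ points of $G$ impose independent conditions on forms of degree $a-1$, and here the product structure of the grid on $\mathcal Q\cong\mathbb P^1\times\mathbb P^1$ does the work. The evaluation map $R_{a-1}\to k^{a^2}$ at the grid points factors through restriction to $\mathcal Q$. The restriction $R_{a-1}\to S_{(a-1,a-1)}$ is surjective: under the Segre identification $z_{kl}=x_ky_l$, every bidegree-$(a-1,a-1)$ monomial is a product of $a-1$ of the $z_{kl}$ and hence lifts to a degree-$(a-1)$ form on $\mathbb P^3$ (equivalently, one may invoke the projective normality of the smooth quadric as recalled in Remark~\ref{r.P1xP1}). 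The subsequent evaluation $S_{(a-1,a-1)}=k[x_0,x_1]_{a-1}\otimes k[y_0,y_1]_{a-1}\to k^{a}\otimes k^{a}$ at the $a\times a$ product set is the tensor product of two Vandermonde isomorphisms, each being evaluation of binary forms of degree $a-1$ at $a$ distinct points of $\mathbb P^1$. Thus the composite is surjective, its kernel is $[I_G]_{a-1}$, and therefore $h_{R/I_G}(a-1)=a^2=\dim_k W_G$.

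Finally, combining $W_G\subseteq W_{\mathcal Q}$ with $\dim_k W_G=\dim_k W_{\mathcal Q}=a^2$ forces $W_G=W_{\mathcal Q}$, which gives the equality of ideals, while the common dimension $a^2$ is the ``in particular'' claim. The only place where the grid hypothesis is genuinely used, and the step I would be most careful about, is the independence count: one must make sure the $\mathbb P^3$-degree $a-1$ matches bidegree $(a-1,a-1)$ on $\mathcal Q$ and that the restriction $R_{a-1}\to S_{(a-1,a-1)}$ really is onto. The two binomial and Vandermonde computations themselves are routine.
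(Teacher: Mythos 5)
Your proposal is correct, and it shares the skeleton of the paper's proof --- observe the inclusion $W_G\subseteq W_{\mathcal Q}$, use Macaulay duality to turn both spans into Hilbert function values, and show both equal $a^2$ --- but the key dimension count for the grid is carried out by a genuinely different argument. The paper shows directly that $[I_G]_{a-1}=[(Q)]_{a-1}$: by Bezout, a form of degree $a-1$ vanishing at the $a$ points of $G$ on a grid line must contain that line, hence all $2a$ grid lines, which form a complete intersection of type $(2,a)$; since $a-1<a$, such a form must be a multiple of $Q$. This makes the degree-$(a-1)$ pieces of the two point ideals visibly identical and reduces everything to the single computation $\dim[R/(Q)]_{a-1}=a^2$. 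You instead compute the two dimensions separately: the quadric side via the binomial identity $\binom{a+2}{3}-\binom{a}{3}=a^2$, and the grid side by showing that the $a^2$ points impose independent conditions on forms of degree $a-1$, using surjectivity of the restriction $R_{a-1}\to S_{(a-1,a-1)}$ (projective normality of the smooth quadric, which you also justify directly on monomials) followed by the tensor product of two Vandermonde isomorphisms $k[x_0,x_1]_{a-1}\otimes k[y_0,y_1]_{a-1}\to k^{a}\otimes k^{a}$. Your route is somewhat longer but entirely self-contained and makes the role of the $\mathbb P^1\times\mathbb P^1$ product structure explicit; the paper's Bezout argument is shorter and has the small bonus of identifying the graded pieces themselves rather than only their dimensions. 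Both arguments are valid, and your care about matching total degree $a-1$ with bidegree $(a-1,a-1)$ and about the surjectivity of the restriction map is exactly where the attention is needed.
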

\begin{proof}
Since $(\ell^{a-1} \ | \ \ell^\vee \in G )  \subseteq ( \ell^\vee \ | \ \ell^\vee \in \mathcal 
Q )$, it is enough to show that they have the same dimension in degree $a-1$. Note that by  Bezout's theorem, any  form of degree $a-1$ containing the points of $G$ must also contain the grid lines, and since the union of the grid lines is a complete intersection of type $(2,a)$, any  such form must be a multiple of $Q$. 
Then by Macaulay duality, the degree $a-1$ component of either ideal has dimension equal to $\dim [R/(Q)]_{a-1}$, which is $a^2$.
\end{proof}
Now we prove the main result of this section.  Given a form $F\in R$, $F^{\perp}$ denotes the ideal $F^{\perp}=\left(G\in R\ |\ \dfrac{\partial F}{\partial G}=0\right)$. It is known that $R/F^{\perp}$ is an artinian Gorenstein algebra.  See for instance \cite{I.compressed, IK} for an in-depth analysis of such algebras.

\begin{lemma}\label{l. hope so}Let $\mathcal Q$ be a smooth hypersurface in $\mathbb P^n$ of degree $r\ge 2$ defined by the form $Q.$ We have  
     $[(Q^t)^{\perp}]_t=(0)$.
\end{lemma}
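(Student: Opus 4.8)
The plan is to read the statement through Macaulay duality and reduce it to the injectivity of a single catalecticant map. Since $R/(Q^t)^{\perp}$ is artinian Gorenstein with socle degree $rt$, we have $\dim_k[R/(Q^t)^{\perp}]_t=\dim_k\langle\text{the order-}t\text{ partials of }Q^t\rangle$, so $[(Q^t)^{\perp}]_t=(0)$ is equivalent to the injectivity of the map $C\colon R_t\to R_{(r-1)t}$, $G\mapsto\frac{\partial Q^t}{\partial G}$ (the image of $Q^t$ under the differential operator $G$). Because $r\ge 2$ we have $(r-1)t\ge t$ and hence $\dim_kR_{(r-1)t}\ge\dim_kR_t$, so injectivity is exactly the maximal-rank statement one hopes for; equivalently, $R/(Q^t)^{\perp}$ has the maximal possible Hilbert function in degree $t$. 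Thus everything reduces to showing that the only $G\in R_t$ with $\frac{\partial Q^t}{\partial G}=0$ is $G=0$.

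The engine driving injectivity is the smoothness of $\mathcal Q$, entering through the partials of $Q$. First I would record the chain-rule identity generalizing the computation in Lemma \ref{l. P in Q}: for $G\in R_t$,
\[
\frac{\partial Q^t}{\partial G}=t!\;G\!\left(\frac{\partial Q}{\partial x_0},\dots,\frac{\partial Q}{\partial x_n}\right)+Q\cdot H
\]
for some $H\in R$, the first term being the part in which each of the $t$ differentiations lands on a distinct factor $Q$. Smoothness of $\mathcal Q$ means, via the Jacobian criterion and Euler's relation, that $\frac{\partial Q}{\partial x_0},\dots,\frac{\partial Q}{\partial x_n}$ have no common zero in $\mathbb P^n$; hence they form a regular sequence and are algebraically independent, so the substitution $G\mapsto G(\partial Q/\partial x_0,\dots,\partial Q/\partial x_n)$ is injective. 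This is the fact that will ultimately force $G=0$.

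The main obstacle is the correction term $Q\cdot H$: evaluating the identity on $\mathcal Q$ only yields $G(\partial Q/\partial x_0,\dots,\partial Q/\partial x_n)\in(Q)$, which is genuinely weaker than $G=0$ (already for a smooth conic in $\mathbb P^1$ there are nonzero $G$ with $G(\nabla Q)\in(Q)$), so one cannot simply pass to $R/(Q)$; the whole content of the lemma is that these lower-order terms cannot conspire to annihilate $\frac{\partial Q^t}{\partial G}$. For the case $r=2$ that the paper actually uses---$\mathcal Q$ a smooth quadric, which after a change of coordinates (extending scalars if needed) we may take to be $Q=\sum_i x_i^2$---I would dispatch this cleanly with the Fischer, i.e.\ $\mathfrak{sl}_2$, harmonic decomposition $R_t=\bigoplus_j Q^j\mathcal H_{t-2j}$ into $Q(\partial)$-harmonics: writing $G=\sum_j Q^j h_j$ and using the eigen-identities $\frac{\partial Q^t}{\partial h}=c\,Q^{t-\deg h}h$ for harmonic $h$ together with the fact that applying $Q(\partial)^j$ to $Q^{2j}h$ returns a nonzero multiple of $Q^jh$ (both constants nonzero in the relevant range), one finds $\frac{\partial Q^t}{\partial G}=\sum_j d_jQ^jh_j$ with every $d_j\ne0$; since the summands $Q^j\mathcal H_{t-2j}$ are in direct sum, this vanishes if and only if each $h_j=0$, that is $G=0$. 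For general $r$ the harmonic decomposition is unavailable, and I expect the crux to be an inductive or filtration argument controlling $Q\cdot H$ through the fact that each $\partial Q/\partial x_i$ is a nonzerodivisor; this is the step I would expect to cost the most work.
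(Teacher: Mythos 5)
Your reduction of the statement to the injectivity of the catalecticant $G\mapsto \frac{\partial Q^t}{\partial G}$ is exactly the right reformulation (the paper phrases the identical fact as linear independence of the forms $\frac{\partial Q^t}{\partial M}$ as $M$ runs over the monomials of $R_t$), and your treatment of the quadric case is correct and essentially complete: after diagonalizing $Q$ (extending scalars costs nothing, since the dimension of the kernel of a $k$-linear map is stable under field extension), the Fischer decomposition $R_t=\bigoplus_j Q^j\mathcal H_{t-2j}$ together with the nonvanishing in characteristic zero of the constants in $h(\partial)Q^t=c\,Q^{t-\deg h}h$ and in $Q(\partial)^j\bigl(Q^{2j}h\bigr)=d\,Q^jh$ shows that $G(\partial)Q^t$ has a nonzero component in each summand $Q^j\mathcal H_{t-2j}$ in which $G$ does, forcing $G=0$. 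This is a genuinely different route from the paper's, which argues for arbitrary $r$ by induction on $t$: starting from a dependence relation $\sum_M\alpha_M\frac{\partial Q^t}{\partial M}=0$, it differentiates one variable at a time so as to extract a factor $t\,\frac{\partial Q}{\partial x_1}$ and reduce to the corresponding independence statement for $Q^{t-1}$, with the base case $t=1$ supplied by smoothness. Your version buys an explicit eigenbasis and complete transparency for quadrics; the paper's buys uniformity in the degree $r$ and needs no normal form for $Q$.

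The gap is that the lemma is asserted for every smooth hypersurface of degree $r\ge 2$, and for $r\ge 3$ your proposal proves nothing: it stops at the expectation that ``an inductive or filtration argument'' will control the correction term. As you yourself diagnose, the identity $\frac{\partial Q^t}{\partial G}=t!\,G(\nabla Q)+Q\cdot H$ cannot close the argument on its own --- the regular-sequence property of the partials governs only the leading term, and $Q\cdot H$ is not subordinate to it in any grading you have introduced --- so the general case is genuinely missing from the writeup. Two honest ways to repair this: either carry out the induction on $t$ (the paper's route, which nowhere uses $r=2$), or restrict the statement to quadrics, which is the only case the paper ever invokes (Theorem \ref{Gor gen} concerns a smooth quadric surface in $\mathbb P^3$) and for which your harmonic argument is already a complete proof.
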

\begin{proof}
We claim that the elements in the set
$$\left\{\dfrac{\partial Q^t}{\partial M}\ |\ M\  \text{monomial in }\ R_t\right\}$$
are linearly independent for any $t\ge 0$. 
The case $t=1$ is true since $\mathcal Q$ is smooth. Let $t>1$.

Thus let $\alpha_M\in k$ be such that
$$\sum_M \alpha_M\dfrac{\partial Q^t}{\partial M}=0.$$
This implies (set $c_M$ the exponent of $x_1$ in $M$ 
)
$$\sum_{M\ :\ c_M>0 } \alpha_M\dfrac{\partial Q^t/ \partial x_1}{\partial M/ \partial x_1}=\sum_{N\ :\ M=x_1N} \alpha_Mt\dfrac{\partial Q}{\partial x_1}\dfrac{\partial Q^{t-1}}{\partial c_M N}=t\dfrac{\partial Q}{\partial x_1}\cdot \sum_{N\ :\ M=x_1N} c_M\alpha_M\dfrac{\partial Q^{t-1}}{\partial N}=0.$$
Since the monomials $N$ have degree $t-1$, by induction we get $c_M\alpha_M=0$ for any $c_M>0$, that is $\alpha_M=0$ for any $M$ such that $x_1|_M$. Repeating the same procedure above for the other variables we conclude the proof of the  claim. But then the lemma follows since if $G$ is a form of degree $t$ such that $\frac{\partial Q^t}{\partial G} = 0$, one simply writes $G$ as a linear combination of monomials and applies the claim.
\end{proof}

\begin{theorem}\label{Gor gen}Let $\mathcal Q$ be a smooth quadric surface  in $\mathbb P^3$ defined by $Q\in R_2.$  We have
    $$(Q^t)^{\perp}= (\ell^{t+1}\ |\ \ell^{\vee}\in \mathcal Q )$$
    and $R/(Q^t)^\perp$ is a compressed Gorenstein algebra.
\end{theorem}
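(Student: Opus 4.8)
The plan is to establish both assertions at once by computing the Hilbert functions of $R/(Q^t)^\perp$ and of $R/(\ell^{t+1}\mid \ell^\vee\in\mathcal Q)$ and showing they agree. Throughout write $K=(Q^t)^\perp$ and $J=(\ell^{t+1}\mid \ell^\vee\in\mathcal Q)$. Since $K=F^\perp$ for $F=Q^t$, the algebra $R/K$ is Gorenstein of socle degree $\deg Q^t=2t$. Lemma \ref{l. hope so} (with $r=2$) gives $[K]_t=(0)$; as $K$ is an ideal in the domain $R$, a nonzero form of degree $i<t$ in $K$ would produce, after multiplying by $x_1^{\,t-i}$, a nonzero element of $[K]_t$, so in fact $[K]_i=(0)$ for all $i\le t$. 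Hence $h_{R/K}(i)=\dim R_i=\binom{i+3}{3}$ for $i\le t$, and Gorenstein symmetry forces $h_{R/K}(i)=h_{R/K}(2t-i)=\binom{2t-i+3}{3}$ for $i\ge t$. This is the maximal Hilbert function compatible with socle degree $2t$, so $R/K$ is compressed Gorenstein; this proves the second assertion, and it remains to show $J=K$.

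Next I would verify the containment $J\subseteq K$. Fix $\ell$ with $P=\ell^\vee\in\mathcal Q$. By Lemma \ref{l. P in Q}, $\frac{\partial Q}{\partial \ell^2}=0$. Because $Q$ is quadratic, $L:=\frac{\partial Q}{\partial\ell}$ is a linear form and $\frac{\partial L}{\partial\ell}=\frac{\partial Q}{\partial \ell^2}=0$; a straightforward induction then gives $\frac{\partial Q^t}{\partial\ell^{k}}=\frac{t!}{(t-k)!}\,Q^{t-k}L^{k}$ for $0\le k\le t$. Applying $\frac{\partial}{\partial\ell}$ once more yields $\frac{\partial Q^t}{\partial\ell^{t+1}}=\frac{\partial}{\partial\ell}\!\left(t!\,L^{t}\right)=0$. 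Thus $\ell^{t+1}\in K$ for every $\ell^\vee\in\mathcal Q$, and therefore $J\subseteq K$.

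It then remains to compute $h_{R/J}$ by Macaulay duality (cf. \cite{IK}). First, $R/J$ is artinian: as $\mathcal Q$ is nondegenerate, the forms $\ell$ with $\ell^\vee\in\mathcal Q$ span $R_1$, so $V(J)=\emptyset$. By duality $\dim[R/J]_d=\dim[J^{\perp}]_d$, where $F\in R_d$ lies in $J^{\perp}$ precisely when $\frac{\partial F}{\partial\ell^{t+1}}=0$ for all $\ell^\vee\in\mathcal Q$. The crucial point is to read this condition geometrically: choosing coordinates so that $P=\ell^\vee=[1:0:0:0]$ and $\frac{\partial}{\partial\ell}=\partial_{x_1}$, we have $\frac{\partial F}{\partial\ell^{t+1}}=\partial_{x_1}^{t+1}F$, which vanishes iff $\deg_{x_1}F\le t$, i.e. iff $\operatorname{ord}_P(F)\ge d-t$. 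Hence $F\in J^{\perp}$ iff $\operatorname{ord}_P(F)\ge d-t$ for every $P\in\mathcal Q$; since $\mathcal Q=V(Q)$ is smooth and irreducible, this is equivalent to $Q^{\,d-t}\mid F$ (writing $F=Q^{j}U$ with $Q\nmid U$, a general point of $\mathcal Q$ has $\operatorname{ord}_P F=j$, forcing $j\ge d-t$). Consequently $[J^{\perp}]_d=Q^{\,d-t}R_{2t-d}$ for $t\le d\le 2t$ (and $[J^\perp]_d=R_d$ for $d\le t$, while $[J^\perp]_d=(0)$ for $d>2t$), so $\dim[R/J]_d=\binom{2t-d+3}{3}$ for $d\ge t$ — exactly $h_{R/K}$. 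Together with $J\subseteq K$, equality of Hilbert functions in every degree gives $[J]_d=[K]_d$ for all $d$, hence $J=K$.

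The step I expect to be the main obstacle is the geometric translation in the last paragraph: recognizing that killing every $(t+1)$-st directional derivative of $F$ toward all points of $\mathcal Q$ is the same as $F$ vanishing to order $d-t$ along $\mathcal Q$, and then passing from this condition on orders of vanishing to honest divisibility by $Q^{\,d-t}$ (i.e.\ that the relevant symbolic power of $(Q)$ is the ordinary power). As a consistency check, in degree $t+1$ the formula gives $\dim[K]_{t+1}=\binom{t+4}{3}-\binom{t+2}{3}=(t+2)^2$, matching the dimension count of Lemma \ref{l. in a grid is enough} with $a=t+2$.
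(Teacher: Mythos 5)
Your proof is correct, and while its skeleton (one containment plus a dimension count) matches the paper's, both halves are executed by genuinely different means. For $J\subseteq K$ you note that $L=\partial Q/\partial\ell$ is a linear form killed by $\partial/\partial\ell$ and derive the closed form $\partial Q^t/\partial\ell^{\,k}=\tfrac{t!}{(t-k)!}Q^{t-k}L^{k}$; the paper instead expands $\partial Q^t/\partial\ell^{\,t+1}$ by Leibniz and kills the two blocks of terms via Lemma~\ref{l. P in Q} and induction on $t$. More substantially, for the reverse containment the paper compares dimensions only in degree $t+1$, using Lemma~\ref{l. in a grid is enough} (a Bezout argument on an $(t+2)\times(t+2)$ grid) together with the fact --- left implicit when it concludes $I=(Q^t)^\perp$ from agreement in a single degree --- that a compressed Gorenstein algebra of even socle degree $2t$ has its ideal generated in degree $t+1$. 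You instead compute the entire Hilbert function of $R/J$ by identifying $[J^\perp]_d$ with $Q^{\,d-t}R_{2t-d}$: apolarity to all $(t+1)$-st powers of points of $\mathcal Q$ is vanishing to order $d-t$ at every point of $\mathcal Q$, which by irreducibility of $Q$ is divisibility by $Q^{\,d-t}$. This buys equality of $J$ and $K$ in every degree with no input from the structure theory of compressed algebras and no appeal to Lemma~\ref{l. in a grid is enough} (whose $(t+2)^2$ count you recover as a consistency check); the cost is the extra care needed in translating directional derivatives into orders of vanishing and then into honest divisibility, which you have supplied. Both arguments are sound; yours is the more self-contained of the two.
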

\begin{proof}
    Set $I=(\ell^{t+1}\ |\ \ell^{\vee}\in \mathcal Q ).$
First we show  that $I\subseteq (Q^t)^{\perp}.$
Let $P\in \mathcal Q$ and set $\ell=P^{\vee}.$
The case $t=1$ is Lemma \ref{l. P in Q}.
So assume $t\ge 1$. We have
 \[
\begin{array}{rl}
   \dfrac{\partial Q^{t}}{\partial \ell^{t+1}}= &\displaystyle \sum_{i=0}^{t+1}\binom{t+1}{i}\dfrac{\partial Q^{t-1}}{\partial \ell^i}\dfrac{\partial Q}{\partial \ell^{t+1-i}}  \\[11pt]
     =&\displaystyle \sum_{i=0}^{t-1}\binom{t+1}{i}\dfrac{\partial Q^{t-1}}{\partial \ell^i}\underbrace{\dfrac{\partial Q}{\partial \ell^{t+1-i}}}_{=0\ \text{by\ Lemma}\  \ref{l. P in Q}} +\sum_{i=t}^{t+1}\binom{t+1}{i}\underbrace{\dfrac{\partial Q^{t-1}}{\partial \ell^i}}_{=0 \ \text{by induction}}\dfrac{\partial Q}{\partial \ell^{t+1-i}}=0.\\
\end{array} \] 

Now we show that $I\supseteq (Q^t)^{\perp}.$ 
 Since $R/(Q^t)^\perp$ has socle degree $2t$ and the ideal starts in degree $t+1$ (from Lemma \ref{l. hope so}), we have that $R/(Q^t)^\perp$  is a compressed Gorenstein algebra. Finally, one checks that in degree $t+1$ this forces the dimension of the ideal to be $(t+2)^2$, which in Lemma \ref{l. in a grid is enough} we saw is the case for $I$. Therefore $I = (Q^t)^\perp$.
\end{proof}

\begin{corollary} \label{low powers, square grid}
    Let $X \subset \mathcal Q$ be an $a \times b$ grid with $a \leq b$ and let $\ell_1,\dots,\ell_{ab}$ be the dual linear forms. Let $1 \leq d \leq a-1$ and let $I = (\ell_1^d, \dots, \ell_{ab}^d)$. Then $R/I$ has the WLP.
\end{corollary}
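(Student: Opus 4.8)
The plan is to identify $R/I$ with the apolar Gorenstein algebra $(Q^{d-1})^\perp$ from Theorem~\ref{Gor gen} and then read off the WLP from the fact that this algebra is compressed of \emph{even} socle degree. The case $d=1$ is immediate: a grid with $a,b\ge 2$ spans $\mathbb{P}^3$, so $I$ is the irrelevant ideal and $R/I=k$. Hence assume $d\ge 2$ and set $s=d-1\ge 1$.

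First I would reduce to a square subgrid. Since $d\le a-1\le b-1$, the grid $X$ contains a $(d+1)\times(d+1)$ subgrid $Y$, and by part~(b) of the Corollary established above for $(d+1)\times(d+1)$ subgrids we have $I=(\ell^d\ |\ \ell^{\vee}\in X)=(\ell^d\ |\ \ell^{\vee}\in Y)$. Now $Y$ is a $(d+1)\times(d+1)$ grid lying on $\mathcal Q$, so Lemma~\ref{l. in a grid is enough}, applied to a grid of size $d+1$ (whose natural generators are the $(d+1)-1=d$-th powers), gives $(\ell^d\ |\ \ell^{\vee}\in Y)=(\ell^d\ |\ \ell^{\vee}\in\mathcal Q)$. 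Finally Theorem~\ref{Gor gen}, taken with $t=d-1$ so that $t+1=d$, identifies this with $(Q^{d-1})^{\perp}$. Thus $A:=R/I\cong R/(Q^{d-1})^{\perp}$ is a compressed artinian Gorenstein algebra of socle degree $2(d-1)=2s$, which is even.

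The WLP then falls out of the numerics. Because $A$ is compressed of socle degree $2s$, its Hilbert function is $\min\{\dim R_i,\dim R_{2s-i}\}$, so $[A]_i=R_i$ for every $i\le s$ (consistently, the ideal is zero in degrees $\le s$ by Lemma~\ref{l. hope so} together with the fact that $R$ is a domain). Hence for any nonzero linear form $\ell$ and every $i\le s$, the map $\times\ell\colon[A]_{i-1}\to[A]_i$ is simply the multiplication $R_{i-1}\to R_i$, which is injective. For the remaining degrees I would invoke Gorenstein (Poincar\'e) duality: the perfect pairing $[A]_i\times[A]_{2s-i}\to[A]_{2s}\cong k$ makes $\times\ell\colon[A]_i\to[A]_{i+1}$ the transpose of $\times\ell\colon[A]_{2s-i-1}\to[A]_{2s-i}$. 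For $i\ge s$ the latter map lives entirely in degrees $\le s$ and is therefore injective, so its transpose is surjective; in particular the central map $[A]_s\to[A]_{s+1}$ is surjective, being dual to the injective map $[A]_{s-1}\to[A]_s$. Every multiplication $\times\ell$ thus has maximal rank, and $A$ has the WLP.

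The substantive content is the chain of ideal identifications in the first step; once $R/I$ is recognized as $(Q^{d-1})^{\perp}$, the even socle degree does all the remaining work, forcing $A$ to coincide with the full polynomial ring below the middle and thereby making injectivity (and, dually, surjectivity) automatic. I would not expect a genuine obstacle here. I note moreover that the argument succeeds for \emph{every} nonzero $\ell$, not merely a general one, which already indicates that the non-Lefschetz locus of these algebras is empty --- a point presumably taken up in Section~\ref{sec: non-Lefschetz locus}.
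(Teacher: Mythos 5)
Your proposal is correct and follows essentially the same route as the paper: reduce to a $(d+1)\times(d+1)$ subgrid, identify $I$ with $(Q^{d-1})^{\perp}$ via Lemma~\ref{l. in a grid is enough} and Theorem~\ref{Gor gen}, and conclude WLP from the fact that a compressed Gorenstein algebra of even socle degree agrees with $R$ up to the middle (giving injectivity for any nonzero linear form) and is surjective thereafter by duality. The paper leaves this last numerical/duality step implicit, whereas you spell it out; your closing observation that the argument works for every nonzero $\ell$, so the non-Lefschetz locus is empty, is exactly part (a) of the Proposition in Section~\ref{sec: non-Lefschetz locus}.
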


\begin{proof}
    Let $Y \subset X$ be a $(d+1) \times (d+1)$ subgrid of $X$. Consider the ideal $(Q^{d-1})^\perp$. By Lemma \ref{l. in a grid is enough} and Theorem \ref{Gor gen}, $I=(\ell^d\ |\ \ell^{\vee} \in Y)$ and thus  $R/I$ is a compressed Gorenstein algebra with even socle degree. Therefore $R/I$ has the WLP.
\end{proof}

\section{The dimension of the cokernel and geproci sets}\label{sec. coker geproci}

For a finite set of points $X\subseteq \mathbb P^3$ and a positive integer $d$, we denote by $\Lambda_{X,d}$ the ideal generated by the $d$-th powers of the linear forms dual to the points in $X$.

In the next lemma we apply some known results to the case of geproci sets. It is crucial for this work since it allows us to compute the dimension of the cokernel of the map  $\times \ell$ in a certain range.

\begin{lemma}\label{l. cokernel (a,b)-geproci} Let $X$ be an $(a,b)$-geproci set in $\mathbb P^3$. Let $P$ be a general point in $\mathbb P^3$ and $\ell$ be the linear form dual to $P$. Let  $\overline{X}=\pi_P(X)$ be the projection of $X$ from $P$ to a general plane. Set $I=\Lambda_{X,d}$ and $A=R/I$. 

Then, if $t$ is a non-negative integer, for the dimension of the cokernel of the map 
$$\times \ell:A_{d+t-1}\to A_{d+t}$$
 we have:
\begin{itemize}
    \item[$(a)$]$\dim coker(\times \ell)=\dim[I_{\overline X}^{t}]_{d+t};$
    \item[$(b)$] moreover, if $a(t+1)+b>d +t$   then 
    $$\dim coker(\times \ell)=\sum_{i=0}^{t+1}\binom{d+t+2-a(t+1)-(b-a)i}{2}.$$
\end{itemize}
\end{lemma}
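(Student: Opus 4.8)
The plan is to establish (a) by transporting the cokernel to the general projection plane and invoking Macaulay (apolarity) duality there, and then to deduce (b) by substituting the complete intersection resolution of Lemma \ref{pwrs of ci}. First I would note that, since $\ell=P^{\vee}$ with $P$ general, passing to $\overline R:=R/(\ell)=k[\mathbb P^2]$ is precisely restriction to the plane onto which $\pi_P$ projects, and that the image of $\Lambda_{X,d}$ in $\overline R$ is generated by the $d$-th powers of the reductions $\overline{\ell_{i,j}}$ of its generators. These reductions are exactly the linear forms dual to the projected points, so the image is $\Lambda_{\overline X,d}$ and
\[
\dim\coker(\times\ell)=\dim[R/(\Lambda_{X,d}+(\ell))]_{d+t}=\dim[\overline R/\Lambda_{\overline X,d}]_{d+t}.
\]
Because $P$ is general, $\pi_P$ is injective on $X$, so $\overline X$ consists of $ab$ distinct points; and since $X$ is $(a,b)$-geproci, $\overline X$ is a complete intersection of type $(a,b)$, a fact I will use at the end of each part.

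The crux of (a) is a duality computation in the three-variable ring $\overline R$. Working with the differentiation action in characteristic zero, I would show that a form $F$ of degree $d+t$ is annihilated by $\overline\ell^{\,d}$ exactly when $F$ vanishes to order at least $(d+t)-d+1=t+1$ at the point $\overline\ell^{\vee}$; ranging over all of $\overline X$, this identifies the Macaulay dual of $[\overline R/\Lambda_{\overline X,d}]_{d+t}$ with the degree $d+t$ component of the symbolic power $I_{\overline X}^{(t+1)}$, so that $\dim[\overline R/\Lambda_{\overline X,d}]_{d+t}=\dim[I_{\overline X}^{(t+1)}]_{d+t}$. Since $\overline X$ is a complete intersection, Lemma \ref{pwrs of ci} gives $I_{\overline X}^{(t+1)}=I_{\overline X}^{t+1}$, which yields (a). I expect this duality step — pinning down the exact order of vanishing forced by a single power $\overline\ell^{\,d}$ and checking that the conditions assemble correctly over all points of $\overline X$ — together with the genericity bookkeeping (that one general $\ell$ may simultaneously play the role of the projection center's dual and of a generic Lefschetz form, both being dense open conditions) to be the main obstacle.

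For (b) I would feed the complete intersection directly into Lemma \ref{pwrs of ci}: with $\mathbb G=\overline R(-a)\oplus\overline R(-b)$ and $\mathbb F=\overline R(-a-b)$, the module $\mathrm{Sym}^{t+1}\mathbb G$ splits as $\bigoplus_{i=0}^{t+1}\overline R(-a(t+1)-(b-a)i)$ and $\mathbb F\otimes\mathrm{Sym}^{t}\mathbb G$ as $\bigoplus_{i=0}^{t}\overline R(-a(t+1)-b-(b-a)i)$. Taking dimensions in degree $d+t$ along the short exact sequence, with the convention that $\binom{n}{2}=0$ for $n<2$ (free modules vanish in negative degrees), gives
\[
\dim[I_{\overline X}^{t+1}]_{d+t}=\sum_{i=0}^{t+1}\binom{d+t+2-a(t+1)-(b-a)i}{2}-\sum_{i=0}^{t}\binom{d+t+2-a(t+1)-b-(b-a)i}{2}.
\]
The hypothesis $a(t+1)+b>d+t$ forces every argument in the second sum to be at most $d+t+2-a(t+1)-b<2$, so that sum vanishes termwise and only the asserted expression survives. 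The sole care needed here is the truncation convention, and the observation that the stated inequality is exactly what clears the correction term coming from $\mathbb F$.
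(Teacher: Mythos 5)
Your argument is correct and essentially the paper's own: both identify $\dim\coker(\times\ell)$ with $\dim[R/(I+(\ell))]_{d+t}$, use apolarity to convert this to $\dim[I_{\overline{X}}^{(t+1)}]_{d+t}$ (you carry out the projection step directly by reducing modulo $\ell$ and dualizing in the plane, where the paper dualizes in $\PP^3$ and cites \cite[Proposition 3.4]{MMN}), invoke the complete intersection property of $\overline{X}$ to equate symbolic and ordinary powers, and obtain (b) from the resolution of $(f,g)^{t+1}$, whose first syzygies start in degree $a(t+1)+b$. One remark: your computation lands on $\dim[I_{\overline{X}}^{\,t+1}]_{d+t}$, with exponent $t+1$; this is consistent with part (b) and with the paper's own intermediate steps, so the exponent $t$ in the printed statement of (a) should be read as a typo rather than as a discrepancy with your proof.
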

\begin{proof}
Consider the exact sequence
\begin{equation} \label{std seq}
    R/(I)(-1) \stackrel{\times \ell}{\longrightarrow} R/(I) \rightarrow R/(I+ (\ell)) \rightarrow 0.
\end{equation}
Let $\wp$ be the ideal of the point $P$ dual to $\ell$ in $\PP^3$. We have, using Macaulay duality, 
\[
\dim [R/(I+(\ell))]_{d+t} = \dim[I_{X}^{(t+1)}\cap \wp^{d+t}]_{d+t}.
\]
By \cite[Proposition 3.4]{MMN}, $$\dim[I_{X}^{(t)}\cap \wp^{d+t}]_{d+t}= \dim [I_{\overline{X}}^{(t+1)}]_{d+t},$$ 
where the vector space on the left is contained in  $k[\mathbb P^3]_{d+t}$ and the one on the right in $k[\mathbb P^2]_{d+t}$.  
    
Since $X$ is an $(a,b)$-geproci, therefore $\overline{X}$ is a complete intersection of type $(a,b)$, i.e., $I_{\overline{X}}$ is generated by a regular sequence $(f,g)$ for $S=k[\mathbb P^2]$, with $f\in S_a$ and $g\in S_{b}$. As such, any power is a saturated ideal, i.e., $I_{\overline{X}}^{(t+1)}=I_{\overline{X}}^{t+1}=(f,g)^{t+1}$ for any $t\ge 0$ (see for instance \cite{GV} Lemma 4.1, \cite{ZS} Lemma 5, Appendix 6 or \cite{codim2} Theorem 2.9). Thus $$\begin{array}{rl}
     \dim(coker(\times \ell))=& \dim [R/(I,\ell)]_{d+t} = \dim [I_{\overline{X}}^{t}]_{d+t} \\
\end{array}.$$ 
This proves $(a)$.

For $(b)$ notice that, since the first syzygies for the ideal $(f,g)^{t+1}=(f^{t+1},f^{t}g,\cdots )$ only begin in degree $a(t+1)+b$ (see Lemma \ref{pwrs of ci}), and we are assuming $a(t+1)+b>d +t$, thus we get
$$[I_{\overline{X}}^{t+1}]_{d+t}=
\bigoplus_{i=0}^{t+1} [f^{t+1-i}g^i]_{d+t}=\bigoplus_{i=0}^{t+1} \left[S\right]_{d+t-a(t+1-i)-bi}.$$
And then

\[\dim coker(\times \ell)=\dim[I_{\overline{X}}^{t+1}]_{d+t}=
 \sum_{i=0}^{t+1}  \binom{d+t+2-a(t+1)-(b-a)i}{2}.\qedhere\]
\end{proof}

\section{Some homological invariants related to $a\times b$ grids}\label{sec. hom inv grids}
In this section $X$ denotes an $a \times b$ grid on a smooth quadric surface $\mathcal Q$, with $b\ge a \ge 2$. 

 Let $I_X$ be the homogeneous ideal of $X$ and $I_X^{(d)} = (I_X^d)^{sat}$ its $d$-th symbolic power, for any $d \geq 1$. Set $X = \{ P_{i,j}|\ 1\le  i\le a, 1\le j\le b \}$ and let $X^{(d)}$ be the scheme defined by $I_X^{(d)}$.

 For $d \geq 2$ the scheme $X^{(d)}$ does not lie on $Q$, but we denote by $Y_d$ the subscheme that does lie on $Q$, that is $I_{Y_d} = (I_X^{(d)} + (Q))^{sat}$ as a subscheme of $\PP^3$ and $I_{Y_d|Q}$ is defined by
\[
\left ( \frac{I_X^{(d)} + (Q)}{(Q)} \right )^{sat}.
\]

\begin{lemma} \label{l. indep cond}
Let $X$ be an $a\times b$ grid, say $b\ge a\ge 2$, and let $d$ be a positive integer.   Then  
	
	\begin{itemize}
		\item[$(a)$] The value of the Hilbert function of $Y_d$ (as a subscheme of $\PP^3$) in degree $b d-1$ is equal to the multiplicity of $Y_d$. That is,
		\[
		h_{{Y_d}} (b d-1) = ab\cdot \binom{d+1}{2}.
		\]
		In other words, as a subscheme of $\PP^3$, $Y_d$ imposes independent conditions on forms of degree $b d-1$.

		\item[$(b)$] For any $d$, the value of the Hilbert function of $X^{(d)}$ in degree $b d-1$ is equal to the multiplicity of $X^{(d)}$. That is,
		\[
		h_{X^{(d)}} (b d-1) = ab \cdot \binom{d+2}{3}.
		\]
		In other words, $X^{(d)}$ imposes independent conditions on forms of degree $bd-1$.
		
	\end{itemize}
\end{lemma}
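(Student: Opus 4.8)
The plan is to reduce both statements to a degree-equals-multiplicity count, proving $(a)$ by descending to the quadric $\mathcal{Q}\cong\mathbb{P}^1\times\mathbb{P}^1$ and proving $(b)$ by a residuation exact sequence along $Q$ combined with induction on $d$, feeding $(a)$ in as the key input. The two target multiplicities are $\deg Y_d=ab\binom{d+1}{2}$ (each of the $ab$ points carries a fat-point structure of length $\binom{d+1}{2}$ on the smooth surface $\mathcal{Q}$) and $\deg X^{(d)}=ab\binom{d+2}{3}$ (length $\binom{d+2}{3}$ for a $d$-fold point of $\mathbb{P}^3$). Throughout I use that for a zero-dimensional scheme $Z\subset\mathbb{P}^3$ with $I_Z$ saturated the Hilbert function is non-decreasing and bounded above by $\deg Z$, so once $h_Z$ attains the multiplicity it stays there; hence it suffices to show $h$ reaches the multiplicity in the asserted degree.

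For $(a)$ I would first identify $Y_d$ scheme-theoretically with the surface fat point scheme $d\cdot X$: at each $P_{i,j}$, where $Q$ is smooth and thus $Q\in\wp_{i,j}\setminus\wp_{i,j}^2$, a local computation shows that the scheme-theoretic intersection of the $\mathbb{P}^3$-fat point $dP_{i,j}$ with $\mathcal{Q}$ is exactly the surface fat point $dP_{i,j}$. Since $Q\in I_{Y_d}$, a degree-$e$ form on $\mathbb{P}^3$ restricts to a bidegree $(e,e)$ form on $\mathcal{Q}$, giving $h_{Y_d}(e)=\dim[S/I_{Y_d|Q}]_{(e,e)}$. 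Writing $F$ (bidegree $(a,0)$) and $G$ (bidegree $(0,b)$) for the forms cutting out the two rulings, both $F^d$ and $G^d$ vanish to order $d$ at every grid point, so they lie in $I_{Y_d|Q}$, which is therefore nonzero in bidegrees $(ad,0)$ and $(0,bd)$. By Remark \ref{r.P1xP1}, $\Delta h_{Y_d}(i,j)=0$ whenever $i\ge ad$ or $j\ge bd$, so $h_{Y_d}(u,v)=\deg Y_d$ as soon as $u\ge ad-1$ and $v\ge bd-1$. Taking $u=v=bd-1$ is legitimate precisely because $b\ge a$ forces $bd-1\ge ad-1$, which yields $h_{Y_d}(bd-1)=ab\binom{d+1}{2}$.

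For $(b)$ I would residuate $X^{(d)}$ along $Q$. The key algebraic fact is $(I_X^{(d)}:Q)=I_X^{(d-1)}$, which again follows pointwise from $(\wp_{i,j}^d:Q)=\wp_{i,j}^{d-1}$ since $Q$ is a local coordinate at $P_{i,j}$. This gives the exact sequence
\[
0\longrightarrow [R/I_X^{(d-1)}](-2)\xrightarrow{\ \times Q\ } R/I_X^{(d)}\longrightarrow R/(I_X^{(d)}+(Q))\longrightarrow 0,
\]
whence $h_{X^{(d)}}(e)=h_{X^{(d-1)}}(e-2)+\dim[R/(I_X^{(d)}+(Q))]_e$. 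Because $I_{Y_d}=(I_X^{(d)}+(Q))^{sat}\supseteq I_X^{(d)}+(Q)$, we have $\dim[R/(I_X^{(d)}+(Q))]_e\ge h_{Y_d}(e)$. Setting $e=bd-1$, part $(a)$ bounds the last term below by $ab\binom{d+1}{2}$; and since $bd-3\ge b(d-1)-1$ (using $b\ge 2$), the inductive hypothesis plus stabilization of the Hilbert function gives $h_{X^{(d-1)}}(bd-3)=ab\binom{d+1}{3}$. Adding and invoking $\binom{d+1}{3}+\binom{d+1}{2}=\binom{d+2}{3}$ produces the lower bound $h_{X^{(d)}}(bd-1)\ge ab\binom{d+2}{3}=\deg X^{(d)}$; the reverse inequality is automatic, so equality holds. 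The base case $d=1$ is $(a)$ with $d=1$, as $X^{(1)}=Y_1=X$.

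The main obstacle is the non-saturated ideal $I_X^{(d)}+(Q)$: a priori $\dim[R/(I_X^{(d)}+(Q))]_{bd-1}$ could exceed $h_{Y_d}(bd-1)$, so one cannot simply substitute the value from $(a)$ into the exact sequence and expect an equality. The device that removes this difficulty is to use only the inequality $\ge h_{Y_d}$ and then \emph{squeeze} against the unconditional upper bound $h_{X^{(d)}}\le\deg X^{(d)}$, which sidesteps any control of the saturation degree of $I_X^{(d)}+(Q)$. A secondary point needing care is the two scheme-theoretic identifications---the intersection $Y_d=d\cdot X$ on $\mathcal{Q}$ and the residual $(I_X^{(d)}:Q)=I_X^{(d-1)}$---but both reduce to the local model of a smooth surface through a point and are routine once set up.
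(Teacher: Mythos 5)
Your proposal is correct, and for part (a) it follows the paper's argument essentially verbatim: pass to the scheme $Z_d$ in $\mathbb P^1\times\mathbb P^1$, note the ideal is nonzero in bidegrees $(ad,0)$ and $(0,bd)$, and conclude from Remark \ref{r.P1xP1} that $h_{Z_d}(t,t)$ reaches $\deg Z_d=ab\binom{d+1}{2}$ for $t\ge bd-1$ (using $b\ge a$). For part (b) you use the same decomposition as the paper --- the residuation $(I_X^{(d)}:Q)=I_X^{(d-1)}$, the resulting short exact sequence, and induction on $d$ with (a) as the key input --- but you execute the final step differently. The paper sheafifies the sequence, which silently saturates $I_X^{(d)}+(Q)$, and then propagates $h^1$-vanishing: $h^1(\mathcal I_{X^{(d-1)}}(bd-3))=0$ by induction and $h^1(\mathcal I_{Y_d|Q}(bd-1))=0$ from (a) via the sequence \eqref{sheafseq2}. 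You instead stay at the level of graded modules, accept only the inequality $\dim[R/(I_X^{(d)}+(Q))]_{bd-1}\ge h_{Y_d}(bd-1)$, and squeeze against the a priori bound $h_{X^{(d)}}\le\deg X^{(d)}$; this is a legitimately more elementary route that avoids cohomology and correctly sidesteps the saturation issue you identify. You also dispatch the base case $d=1$ by observing $X^{(1)}=Y_1=X$ and quoting (a), whereas the paper constructs explicit separator forms of degree $b-1$; your shortcut is valid since $Q\in I_X$ makes $I_X+(Q)=I_X$ saturated and the two multiplicity formulas agree at $d=1$. The trade-off is that the paper's $h^1$ formulation yields the vanishing statement in a form directly reusable elsewhere, while your squeeze is shorter and self-contained.
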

\begin{proof}
We first prove $(a).$ Let $Z$ be a set of points in $\mathbb P^1\times \mathbb P^1$ whose Segre embedding in  $\PP^3$ is $X$.  	Note that $Z$ is a codimension 2 complete intersection in $\mathbb P^1\times \mathbb P^1$ generated by forms of bi-degree $(a,0)$ and $(0,b)$. Hence, from  \cite[Lemma 5, Appendix 6]{ZS},  $I_Z^{(d)}=I_Z^{d}$.   

Let $Z_d$ be the zero-dimensional scheme in $\mathbb P^1\times \mathbb P^1$ defined by $I_Z^{(d)}$. By \cite[Remark 2.1]{GMR}, we have
	 $$h_{Y_d}(t)= h_{Z_d}(t,t), \ \text{for any}\ t\ge 0. $$
Since the ideal of $Z_d$ has a minimal generator in degrees $(a d,0)$ and one in $(0, b d)$ then, $\Delta h_{Z_d}(i,j)=0$ if $i\ge ad$ or $j\ge bd$ (see Remark \ref{r.P1xP1}). Since  
$h_{Z_d}(u,v)=\sum\limits_{i\le u,\ j\le v} \Delta h_{Z_d}(i,j)$,   see \cite[Remark 2.8,Remark 2.1]{GMR}, then
for any $t\ge b d-1$ we get
$$h_{Z_d}(t,t)=\deg (Z_d)=ab \cdot \binom{d+1}{2}.$$ 

We now prove $(b)$,  by induction on $d$. The statement is true for $d=1$.
Indeed, for any point $P_{i,j}\in X$ we can construct a form $F$ of degree $b-1$ vanishing in $X\setminus\{P_{i,j}\}$ and not in $P_{i,j}.$
We show this construction for $P_{1,1}$, but the general case is completely analogous.
For $i=2,\ldots, a$ let $H_i$ be the linear form defining the plane spanned by the two rulings containing $P_{i,i}$, and for $i=a+1,\ldots, b$ let $H_i$ be the linear form defining a general plane through the point $P_{1,i}.$  
Then $F=H_2\cdots H_a\cdot H_{a+1}\cdots H_{b}$ is a form of degree $b-1$ which does not vanish in $P_{1,1}$ and it vanishes in $X\setminus\{P_{1,1}\}$ since $H_i(P_{i,j})=0$ if either $i>2$ or $i=1$ and $j\le a$
otherwise $H_j(P_{i,j})=0$ for $j> a$.

Now assume that $(b)$ is true for $I_X^{(t)}$ for all $t \leq d-1$. 

Consider the exact sequence
\[
0 \rightarrow (I_X^{(d)} : (Q)) (-2) \stackrel{\times Q}{\longrightarrow} I_X^{(d)} \rightarrow I_{X}^{(d)}/Q \cdot (I_X^{(d)} : (Q)) \rightarrow 0.
\]
Since $Q$ is smooth, it vanishes to multiplicity one at each point of $X$ so we have 
\[
I_X^{(d)} : (Q) = I_X^{(d-1)}.
\]
Furthermore, 
$Q \cdot (I_X^{(d)} : (Q)) = I_X^{(d)} \cap (Q)$ and 
\[
\frac{I_X^{(d)} }{ I_X^{(d)} \cap (Q) } \cong \frac{I_X^{(d)} + (Q)}{(Q)}.
\]
Thus
\begin{equation} \label{new ses}
0 \rightarrow I_X^{(d-1)} (-2) \stackrel{\times Q}{\longrightarrow} I_X^{(d)} \rightarrow \frac{I_X^{(d)} + (Q)}{(Q)} \rightarrow 0   
\end{equation}

is exact.

From the sequence (\ref{new ses}) we sheafify and twist to get
\begin{equation} \label{sheafseq1}
0 \rightarrow \mathcal I_{X^{(d-1)}} (b d-3) \rightarrow \mathcal I_X^{(d)} (b d-1) \rightarrow \mathcal I_{Y|Q} (b d-1) \rightarrow 0
\end{equation}
where the last sheaf is the ideal sheaf of $Y$ viewed as a subscheme of $Q$ and the last map is restriction to $Q$. 
Note also the short exact sequence of sheaves, for any integer $t$, relating the properties of $Y_d$ as a subscheme of $\PP^3$ and as a subscheme of $Q$:
\begin{equation} \label{sheafseq2}
0 \rightarrow \mathcal O_{\PP^3} (t-2) \rightarrow \mathcal I_Y (t) \rightarrow \mathcal I_{Y|Q} (t) \rightarrow 0. 
\end{equation}
It follows from (\ref{sheafseq2}) and from part (a) that 
\[
h^1(\mathcal I_Y (t)) = h^1(\mathcal I_{Y|D}(t)) = 0 \ \ \hbox{ for } \ \ t \geq b d-1.
\]
We also have
\[
h^1(\mathcal I_X^{(d-1)} (b d-3))  =  0
\]
because by induction, $R/I_X^{(d-1)}$ reaches its multiplicity by degree $b  ( d-1)-1$ so certainly it is also equal to the multiplicity in degree $b d-3$. 
Then from the sequence (\ref{sheafseq1}) we obtain
\[
h^1(\mathcal I_X^{(d)}(b d-1)) = 0
\]
and so $X^{(d)}$ imposes independent conditions on forms of degree $b d-1$.
\end{proof}

Let $\Lambda_{X,d}= (\ell_{i,j}^d\ |\ 1\le i,j\le a )$  the ideal generated by the $d$-th powers of the linear forms dual to the points in $X$ and  $A_{X,d} = R/\Lambda_{X,d}$  for a positive integer $d$. 
In the next result, using Macaulay duality and Lemma \ref{l. indep cond}, we derive information for the ideal $\Lambda_{X,d}$.

\begin{corollary}\label{c. no syzygies in low degrees}
	Let $X$ be an $a\times b$ grid and set $I=\Lambda_{X,d}$.  Let $d\ge b-1$ be any integer. Denote by $q,r$ the non-negative integers such that  $d= (b-1) q+ r$ and $0 \leq r< b-1$ (note that we must have $q\ge 1$). Then
	\begin{itemize}
		\item[(a)] $ [I]_{t} = [R]_{t-d} \cdot [I]_d$ for $d \leq t \leq d+q-1$. In particular,  $\dim [I]_t = ab\cdot  \displaystyle \binom{t-d+3}{3}$ for $d \leq t \leq d+q-1$;
		
		\item[(b)] $I$ has no syzygies of degree $\leq q-1$;
		
		\item[(c)] $R/I$ has no socle in degree $\leq d+q-2$.
	\end{itemize}
\end{corollary}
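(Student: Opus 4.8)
The plan is to transfer the statement about the ideal $I = \Lambda_{X,d}$ into a statement about the symbolic power $I_X^{(d)}$ via Macaulay duality, and then feed in the independent-conditions result of Lemma~\ref{l. indep cond}(b). The Macaulay-duality dictionary identifies $\dim[I]_t$ with $\binom{t+3}{3} - \dim[R/I_X^{(t-d+1)}]_t$ in the appropriate range (the inverse system of $\Lambda_{X,d}$ in degree $t$ is governed by the symbolic power $I_X^{(t-d+1)}$, since raising the power of the dual linear forms to $d$ corresponds to fattening the points). So the first step is to make this correspondence precise and record exactly which symbolic power $I_X^{(e)}$ controls degree $t$.

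For part (a), the key observation is that in the stated range $d \le t \le d+q-1$, the relevant symbolic power is $I_X^{(e)}$ with $e = t-d+1 \le q$, and one checks that $t = d + (e-1) \le (b-1)q + r + (q-1)$, which keeps us at or below the threshold $be - 1$ where Lemma~\ref{l. indep cond}(b) guarantees $X^{(e)}$ imposes independent conditions. Here I would carefully verify the inequality $t \le be - 1$ using $d = (b-1)q + r$ and $e = t-d+1$; this is the routine but essential numeric check. Once $X^{(e)}$ imposes independent conditions in degree $t$, the value of $\dim[R/I_X^{(e)}]_t$ equals the full multiplicity $ab\binom{e+2}{3}$, and dualizing gives $\dim[I]_t = ab\binom{t-d+3}{3}$. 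The equality $[I]_t = [R]_{t-d}\cdot[I]_d$ then follows because the dimension count shows the multiplication map $[R]_{t-d}\otimes[I]_d \to [I]_t$ is surjective: both sides have been pinned to the same Hilbert value, so no new generators appear in this range.

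Parts (b) and (c) are consequences of (a) read through the minimal free resolution. Since $[I]_t$ is generated entirely in degree $d$ for $t$ up to $d+q-1$, there can be no minimal second syzygies forcing relations until the generation fails, which first happens at degree $d+q$; translating the generator/relation degrees gives that $I$ has no syzygies of degree $\le q-1$ for part (b). For part (c), the absence of socle in low degrees is dual to the surjectivity/generation statement: a socle element of $R/I$ in degree $s$ would correspond to a failure of the multiplication maps $[R/I]_{s} \to [R/I]_{s+1}$ structure, and the independent-conditions result rules this out through degree $d+q-2$. I would phrase (c) by combining the sheaf-cohomology vanishing already established in Lemma~\ref{l. indep cond} with the duality between socle and the top of the inverse system.

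The main obstacle I expect is keeping the index bookkeeping honest: matching the power $d$ of the linear forms, the symbolic-power exponent $e = t-d+1$, and the degree threshold $be-1$ of Lemma~\ref{l. indep cond} all at once, so that the hypothesis $d \ge b-1$ together with $d = (b-1)q + r$ yields exactly the range $d \le t \le d+q-1$ and not one degree more or less. The conceptual content is entirely in Lemma~\ref{l. indep cond}(b); the work here is organizing the Macaulay-duality translation and confirming these inequalities line up precisely.
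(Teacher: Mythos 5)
Your overall strategy for (a) --- Macaulay duality converting $\dim[I]_t$ into the Hilbert function of a symbolic power, then Lemma~\ref{l. indep cond}(b) --- is exactly the paper's, but two of your steps fail as written. First, the direction of the key inequality is reversed. Lemma~\ref{l. indep cond}(b) says $X^{(e)}$ imposes independent conditions in degree $be-1$ and hence in every degree \emph{at least} $be-1$; below that threshold the lemma gives nothing. So what must be verified is $t \ge be-1$ with $e=t-d+1$, not $t\le be-1$. The inequality you propose to check is in fact false whenever $r>0$: at $t=d+q-1$ one has $e=q$ and $t=bq+r-1\ge bq-1=be-1$, with equality only when $r=0$. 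The correct inequality does hold throughout $d\le t\le d+q-1$ (the tightest case is $t=d+q-1$, where the slack is exactly $r$), and the paper streamlines the check by observing that it suffices to treat $t=d+q-1$, since a nonzero kernel of $R(-d)^{ab}\to I$ in a lower degree would persist upward. Relatedly, your displayed duality formula $\dim[I]_t=\binom{t+3}{3}-\dim[R/I_X^{(t-d+1)}]_t$ is the complement of the correct identity $\dim[I]_t=\dim[R/I_X^{(t-d+1)}]_t$, though you silently switch to the right one when you compute.

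Second, your argument for (c) is not a proof. A socle element in degree $s$ is annihilated by every linear form, so deducing ``no socle'' from good behavior of multiplication maps would require injectivity of $\times\ell$ in that degree --- essentially the WLP statement this corollary is later used to establish, so the reasoning risks circularity --- and the cohomology vanishing in Lemma~\ref{l. indep cond} does not directly control minimal generators of the inverse system. The paper instead reads (c) off the minimal free resolution: (b) gives $\beta_{2,j}(R/I)=0$ for $j\le d+q-1$, and minimality forces each $\beta_{i+1,j}\ne 0$ to be preceded by some $\beta_{i,j'}\ne 0$ with $j'<j$, so the fourth Betti numbers (which record the socle) cannot occur in the relevant rows of the Betti table. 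Finally, your derivation of (b) from (a) should be rephrased: surjectivity of $[R]_{t-d}\otimes[I]_d\to[I]_t$ is automatic since $I$ is generated in degree $d$; the content of the dimension count is \emph{injectivity}, which is what excludes syzygies.
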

\begin{proof}
	It is enough to prove it for $t =d+ q-1$. By Macaulay duality  we have 
	\[
	\begin{array}{rcl}
		\displaystyle \dim [\Lambda_{X,d}]_{d+q-1} & = & \dim [R/I_X^{(  q )}]_{d+q-1}=\dim [R/I_X^{(  q )}]_{bq+r-1}. \\
\end{array}
	\]
Since   $bq+r-1 \ge bq-1$, by Lemma \ref{l. indep cond}  $\dim [R/I_X^{(  q )}]_{bq+r-1}$ is equal to $\deg X^{( q )}$
thus 
  $$\displaystyle \dim [\Lambda_{X,d}]_{d+q-1}= \displaystyle \deg X^{( q )} = \displaystyle ab\cdot \binom{ q+2 }{3}.$$
	
	But $\binom{q+2}{3} = \dim [R]_{q-1}$ so we are done (a). Then (b) follows immediately from (a), and (c) follows from (b) since then the upper part of the Betti diagram for $R/I$ looks like
	\[
	\begin{array}{c|ccccc}
		R/I &0&1&2&3&4\\ \hline
		0 & 1 & - & - & -& -  \\
		1 & - & - & - & -& - \\[-5pt]
		\vdots &&&&& \\[-5pt]
		d-2 & - & - & - & -& - \\
		d-1 & - & ab & - &-& -\\
		d & - & - &  - &-& -\\[-5pt]
		\vdots && \\[-5pt]
		d+q-2 & - & - & -& -& - \\
	\end{array}.
	\]
	Thus, in the above table the only non-zero entry corresponds to $\beta_{1,d}(R/I)$ and, in particular, the socle vector must be zero at least until degree $d+q-2$.
\end{proof}

\section{The WLP and $a\times a$ grids}\label{sec. WLP square grids}
In this section $X$ denotes an $a \times a$ grid on a smooth quadric surface $\mathcal Q$. Let $\Lambda_{X,d}= (\ell_{i,j}^d\ |\ 1\le i,j\le a )$ be the ideal generated by the $d$-th powers of the linear forms dual to the points in $X$ and  $A_{X,d} = R/\Lambda_{X,d}$  for a positive integer $d$.  Notice that when $a=2$ we have powers of four general linear forms, which after a change of variables can be chosen to be $x_1, x_2,x_3,x_4$. Thus $R/\Lambda_{X,d}$ automatically  has even the Strong Lefschetz Property, by the famous result of R. Stanley \cite{stanley} and J. Watanabe \cite{watanabe}, but in particular it has the  WLP. So without loss of generality from now on we assume $a \geq 3$.

In this section we show  (Theorem \ref{t. WLP fails} and Theorem \ref{t. WLP holds}) that for $X$  an $a \times a$ grid and $d\ge a-1$: 
$A_{X,d}$ has the WLP if and only if $d$ is multiple of $a-1$.  

We will use connections with the results of the previous sections.

\begin{theorem}\label{t. WLP fails}
Let $X$ be an $a\times a$ grid and $d\ge a-1$. 
Let $q,r$ be the non-negative integers such that $d = (a-1)\cdot q+r$ with $0\le r<a-1$. Set $I=\Lambda_{X,d}$ and $A = R/I$. Then

    \begin{itemize}

    \item[$(a)$] 
   $\displaystyle \Delta h_A(d+q-1) < (q+1)\cdot \binom{r+1}{2}$. 

    \item[$(b)$] For a general linear form $\ell$,   the multiplication map $\times \ell: [A]_{d+q-2} \stackrel{ }{\longrightarrow} [A]_{d+q-1}$ has $$\displaystyle \dim{\coker(\times \ell)}=(q+1)\cdot \binom{r+1}{2}\ge 0.$$

    \end{itemize}
     Therefore, if $r>0$ then $\times \ell$ fails to be injective and $A$ fails to have the WLP.
\end{theorem}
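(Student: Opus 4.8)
The plan is to establish (b) first---it is essentially a specialization of Lemma~\ref{l. cokernel (a,b)-geproci}---then (a), and finally to combine the two through rank--nullity to read off the failure of WLP.

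For (b), I would use that an $a\times a$ grid is $(a,a)$-geproci and apply Lemma~\ref{l. cokernel (a,b)-geproci}(b) with $b=a$ and with $t=q-1$ (legitimate since $q\ge 1$), so that the map $\times\ell\colon A_{d+t-1}\to A_{d+t}$ is exactly $\times\ell\colon[A]_{d+q-2}\to[A]_{d+q-1}$. The first thing to verify is the hypothesis $a(t+1)+b>d+t$; with $b=a$, $t=q-1$ and $d+q=aq+r$ this becomes $a>r-1$, which holds because $r<a-1$. Then the formula collapses: since $b-a=0$ every summand is $\binom{d+t+2-a(t+1)}{2}$, so the sum is $(t+2)\binom{d+t+2-a(t+1)}{2}$. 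Substituting $t=q-1$ and $d=(a-1)q+r$ turns the binomial argument $d+t+2-a(t+1)$ into $r+1$ and the factor $t+2$ into $q+1$, giving $\dim\coker(\times\ell)=(q+1)\binom{r+1}{2}$.

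For (a), I would compute the first difference of the Hilbert function directly. Corollary~\ref{c. no syzygies in low degrees}(a) applies because $d\ge a-1$, and for $b=a$ it gives $\dim[I]_t=a^2\binom{t-d+3}{3}$ for $d\le t\le d+q-1$; combined with $\dim[R]_j=\binom{j+3}{3}$ (and the trivial value $\dim[I]_{d-1}=0$ covering the $q=1$ boundary) this yields
\[
\Delta h_A(d+q-1)=\binom{d+q+1}{2}-a^2\binom{q+1}{2}.
\]
Substituting $d=(a-1)q+r$ and clearing denominators, the desired inequality $\Delta h_A(d+q-1)<(q+1)\binom{r+1}{2}$ reduces, after cancelling a factor of $q$, to the elementary numerical inequality $(a-r)(a-r-1)>0$; this is exactly where the hypothesis $0\le r<a-1$ (so that $a-r\ge 2$) is used.

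Finally, rank--nullity for $\times\ell\colon[A]_{d+q-2}\to[A]_{d+q-1}$ gives $\dim\coker(\times\ell)-\dim\ker(\times\ell)=\Delta h_A(d+q-1)$. Feeding in the exact value of the cokernel from (b) and the strict bound from (a),
\[
\dim\ker(\times\ell)=(q+1)\binom{r+1}{2}-\Delta h_A(d+q-1)>0,
\]
so $\times\ell$ is not injective; and if $r>0$ then $\dim\coker(\times\ell)=(q+1)\binom{r+1}{2}>0$, so it is not surjective either. Hence this particular multiplication map fails to have maximal rank and $A$ fails the WLP. I expect the only genuinely delicate points to be bookkeeping rather than ideas: verifying that the general linear form used in the definition of WLP may be taken as the dual of the general point $P$ in Lemma~\ref{l. cokernel (a,b)-geproci}, handling the $q=1$ edge case in the Hilbert-function computation, and carrying out the binomial reduction to $(a-r)(a-r-1)>0$ correctly.
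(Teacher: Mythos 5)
Your proposal is correct and follows essentially the same route as the paper: part (b) by specializing Lemma~\ref{l. cokernel (a,b)-geproci}(b) with $b=a$, $t=q-1$ (checking $aq+a>d+q-1$), part (a) by computing $\Delta h_A(d+q-1)=\binom{d+q+1}{2}-a^2\binom{q+1}{2}$ from Corollary~\ref{c. no syzygies in low degrees} and reducing the inequality to $(a-r)(a-r-1)>0$, and then comparing the actual cokernel with the maximal-rank prediction. The bookkeeping points you flag (the $q=1$ boundary, the dual of a general point being a general linear form) are indeed the only delicate spots and all check out.
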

\begin{proof}
We note that the last assertion of the theorem is a consequence of items $(a)$ and $(b)$. Indeed, from $(a)$ the expected dimension of the cokernel of the map $\times \ell $ is strictly smaller than $(q+1)\cdot \binom{r+1}{2}$. From $(b)$, the dimension is in fact $(q+1)\cdot\binom{r+1}{2}$. Hence, for $r>0$, we expect injectivity (since $\dim{\coker(\times \ell)}>0$) but it is not present, so $A$ fails the  WLP.

Now we prove $(a)$. From Corollary \ref{c. no syzygies in low degrees}, we have $$\begin{array}{rl}
     \Delta h_A(d+q-1)=&h_A(d+q-1)- h_A(d+q-2)\\[10pt]
     =& \displaystyle\dim [R]_{d+q-1}-\dim [R]_{d+q-2}- \dim [I]_{d+q-1}+\dim [I]_{d+q-2} \\[10pt]
     =& \displaystyle\binom{d+q+1}{2}- a^2\cdot   \binom{q+1}{2}=\displaystyle \binom{aq+r+1}{2}- a^2\cdot   \binom{q+1}{2}\\[11pt]
     
    =& \displaystyle \dfrac{(aq+r+1)(aq+r)- a^2(q+1)q}{2}\\[11pt]

=& \displaystyle \dfrac{2aqr+aq+r^2+r -a^2q}{2}.\\
\end{array} $$

Moreover, the following inequality
$$\dfrac{2aqr+aq+r^2+r -a^2q}{2}< \dfrac{qr^2+qr+r^2+r}{2}=(q+1)\cdot \binom{r+1}{2}$$ 
holds if and only if
$$2aqr+aq -a^2q< qr^2+qr \Longleftrightarrow 2ar+a -a^2< r^2+r \Longleftrightarrow a-r< r^2+a^2-2ar.$$ 
However, 
$a-r< (a-r)^2$ 
is always true since $r<a-1.$

For $(b)$, since $aq+a=d +q-r+a>d +q-1$  we can apply Lemma \ref{l. cokernel (a,b)-geproci}(b), thus
\[\dim(coker(\times \ell))=\sum_{i=0}^{q}  \binom{d+q+1-aq}{2}=(q+1)\cdot  \binom{r+1}{2}.\qedhere\]
\end{proof}

From Theorem \ref{t. WLP fails} it only  remains open to establish that, for  an $a\times a$ grid $X$ and $d = (a-1) q$, the algebra $R/\Lambda_{X,d}$  has the WLP. 

\begin{remark}\label{r. c=0}
    From the proof of Theorem \ref{t. WLP fails},
for $r=0$ and $d=(a-1)q$ we have
 $$\begin{array}{lcl}
    (a)\ \Delta h_A(d+q-1)&=&
\displaystyle \dfrac{aq-a^2q}{2}=-q\binom{a}{2}<0;\\[11pt]
(b)\ \dim \coker(\times \ell)&=&0.
\end{array} $$
Thus,  the map $\times \ell:[A]_{d+q-2}\to [A]_{d+q-1}$ is surjective. Furthermore,
$$\begin{array}{rl}
     \Delta h_A(d+q-2)=&h_A(d+q-2)- h_A(d+q-3)\\[11pt]
     =& \displaystyle\dim [R]_{d+q-2}-\dim [R]_{d+q-3}- \dim [I]_{d+q-2}+\dim [I]_{d+q-3} \\[11pt]
     =& \displaystyle\binom{d+q}{2}- a^2\cdot   \binom{q}{2}=\displaystyle \binom{aq}{2}- a^2\cdot   \binom{q}{2}\\[11pt]
    =& \displaystyle \dfrac{(aq)(aq-1)- a^2q(q-1)}{2}\\[11pt]
=& \displaystyle \dfrac{-aq+a^2q}{2}=q\binom{a}{2}.\\
\end{array} $$
Thus, the multiplication map $\times \ell:[A]_{d+q-3}\to [A]_{d+q-2}$ is expected to be injective.
\end{remark}

We recall the following fact.
\begin{remark}\label{r. surjectivity}
    Suppose that $M$ is a finite length  graded $R$-module and $\ell$ is a linear form. Assume that for some integer $n$, we have that  the map $\times \ell : [M]_n \rightarrow [M]_{n+1}$ is surjective and furthermore that $M$ has no minimal generator in degree $\geq n$. Then $\times \ell : [M]_t \rightarrow [M]_{t+1}$ is surjective for all $t \geq n$. This is because the module $M/(\ell\cdot M)$ is zero in degree $n+1$ and has no generators beyond this degree so it must be zero past this point. 
\end{remark}

\begin{theorem}\label{t. WLP holds}
Let $X$ be an $a\times a$ grid and $d = (a-1) q$, where $q\ge 1$. 
Set $I=\Lambda_{X,d}$ and $A = R/I$. Then
$A$ has the WLP. 
\end{theorem}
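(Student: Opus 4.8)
The plan is to exhibit a single general linear form $\ell$ for which $\times\ell:[A]_{\tau-1}\to[A]_\tau$ has maximal rank in every degree $\tau$, splitting the degrees at the peak of the Hilbert function. Since $r=0$, Remark \ref{r. c=0} gives $\Delta h_A(d+q-2)=q\binom{a}{2}>0$ and $\Delta h_A(d+q-1)=-q\binom{a}{2}<0$, so the natural split is to prove \emph{injectivity} for $\tau\le d+q-2$ and \emph{surjectivity} for $\tau\ge d+q-1$. The main tool is the rank--nullity identity
\[
\dim\ker(\times\ell:[A]_{\tau-1}\to[A]_\tau)=\dim\operatorname{coker}(\times\ell:[A]_{\tau-1}\to[A]_\tau)-\Delta h_A(\tau),
\]
combined with the fact that the Hilbert function is completely explicit in the relevant range: one has $[A]_\tau=[R]_\tau$ for $\tau<d$, while Corollary \ref{c. no syzygies in low degrees}(a) (with $b=a$ and $r=0$) gives $\dim[\Lambda_{X,d}]_\tau=a^2\binom{\tau-d+3}{3}$ for $d\le\tau\le d+q-1$.

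The two outer ranges are immediate. For $\tau\le d-1$ both source and target are $[R]_{\tau-1}$ and $[R]_\tau$, so $\times\ell$ is injective. For the surjective range I apply Lemma \ref{l. cokernel (a,b)-geproci}: writing the general projection as the complete intersection $I_{\overline X}=(f,g)$ with $\deg f=\deg g=a$, the cokernel of $\times\ell:[A]_{d+t-1}\to[A]_{d+t}$ equals $\dim[(f,g)^{t+1}]_{d+t}$. For $t\ge q-1$ a direct check gives $d+t<a(t+1)$, which is the initial degree of $(f,g)^{t+1}$, so this cokernel vanishes and the map is surjective; this recovers the boundary case $t=q-1$ of Remark \ref{r. c=0} and is consistent with the propagation in Remark \ref{r. surjectivity}.

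The heart of the argument is injectivity in the range $d\le\tau=d+t\le d+q-2$, i.e. $0\le t\le q-2$, where now $(f,g)^{t+1}$ has syzygies in degree $d+t$. Lemma \ref{l. cokernel (a,b)-geproci}, together with the minimal free resolution of Lemma \ref{pwrs of ci} (free summands twisted by $-a(t+1)$ with multiplicity $t+2$, and by $-a(t+2)$ with multiplicity $t+1$), gives
\[
\dim\operatorname{coker}(\times\ell:[A]_{d+t-1}\to[A]_{d+t})=(t+2)\binom{d+t-a(t+1)+2}{2}-(t+1)\binom{d+t-a(t+2)+2}{2}.
\]
Meanwhile the explicit Hilbert function yields $\Delta h_A(d+t)=\binom{d+t+2}{2}-a^2\binom{t+2}{2}$. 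By the rank--nullity identity, injectivity in these degrees is equivalent, after substituting $d=(a-1)q$, to the single binomial identity
\[
(t+2)\binom{d+t-a(t+1)+2}{2}-(t+1)\binom{d+t-a(t+2)+2}{2}=\binom{d+t+2}{2}-a^2\binom{t+2}{2}.
\]

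I expect this final identity to be the one genuinely technical point, and hence the main obstacle. The arguments of all four binomial coefficients are affine in the quantity $(a-1)(q-t)$, so after clearing the factor $\tfrac12$ the claim becomes an equality of quadratic polynomials in $(a-1)(q-t)$; it can be verified by expanding both sides and matching the coefficients of $1$, of $(a-1)(q-t)$, and of its square, checking that the two sides agree identically as polynomials in $a$ and $t$. Granting the identity, $\times\ell$ is injective for every $\tau\le d+q-2$ and surjective for every $\tau\ge d+q-1$, hence has maximal rank in all degrees, so $A$ has the WLP. When $q=1$ the crux range $0\le t\le q-2$ is empty and the statement reduces to the Gorenstein case of Corollary \ref{low powers, square grid}.
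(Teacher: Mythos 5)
Your proof is correct, and it differs from the paper's in how the injective half is handled. Both arguments share the same skeleton: split the degrees at the peak $aq-2$ of the Hilbert function, use Corollary \ref{c. no syzygies in low degrees}(a) for the explicit values of $\Delta h_A$, and use Lemma \ref{l. cokernel (a,b)-geproci} to compute cokernels via powers of the projected complete intersection $(f,g)$. The paper, however, checks maximal rank at only \emph{one} degree on the injective side, namely $\tau=d+q-2$ (where $d+q-2=aq-2<a(q-1)+a$, so the syzygy-free formula of Lemma \ref{l. cokernel (a,b)-geproci}(b) gives cokernel $q\binom{a}{2}=\Delta h_A(d+q-2)$), and then propagates injectivity downward by a soft argument: dualize $A$, invoke the absence of socle in degrees $\le d+q-2$ from Corollary \ref{c. no syzygies in low degrees}(c), and apply Remark \ref{r. surjectivity} to the dual module. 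You instead verify injectivity degree by degree on all of $d\le\tau\le d+q-2$; since for $t\le q-3$ the degree $d+t$ exceeds the initial degree $a(t+2)$ of the syzygies of $(f,g)^{t+1}$, you must extend Lemma \ref{l. cokernel (a,b)-geproci}(b) by the correction term coming from the resolution in Lemma \ref{pwrs of ci}, and everything reduces to your displayed binomial identity. That identity does hold: setting $m=(a-1)(q-t)$, the left-hand binomial arguments become $m-(a-2)$ and $m-2(a-1)$, and twice each side equals $m^2+(2at+3)m+3at+2-3a^2t-2a^2$; moreover in the range $0\le t\le q-2$ every argument is a nonnegative integer different from $1$, so the polynomial identity genuinely computes the dimensions. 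The trade-off is clear: your route avoids the duality/no-socle propagation entirely at the cost of a longer (but elementary) computation, while the paper's route needs a single numerical check but leans on Corollary \ref{c. no syzygies in low degrees}(c). One cosmetic remark: part (a) of Lemma \ref{l. cokernel (a,b)-geproci} is printed with exponent $t$ rather than $t+1$; your usage of $(f,g)^{t+1}$ in degree $d+t$ agrees with part (b) and with the lemma's proof, which is the correct reading.
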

\begin{proof}Let $\ell$ be a general linear form.
From Remark \ref{r. c=0}, the map given by the multiplication by $\times \ell:A_{t-1}\to A_{t}$ is surjective for $t=d+q-1$, and hence for $t>d+q-1$.

We claim that  if $\times \ell:A_{t-1}\to A_{t}$ is injective for $t=d+q-2$ then it is injective for $t<d+q-2$ too. 
    
    Indeed, let $B$ the the $k$-dual of $A$ as a graded $R$-module. The fact (from Corollary \ref{c. no syzygies in low degrees}) that $A$ has no socle in degree $\leq d+q-2$ means that $B$ has no minimal generator in the corresponding range of components. Injectivity for $A$ becomes surjectivity for $B$ in the corresponding components. Thus our desired result follows from Remark \ref{r. surjectivity}.

Thus, to conclude the proof it is enough to show that the multiplication map  $\times \ell:[A]_{d+q-3}\to [A]_{d+q-2}$ is injective, i.e., we want to show that the cokernel of $\times \ell: [A]_{d+q-3} \rightarrow [A]_{d+q-2}$ has the expected dimension, that is (from Remark \ref{r. c=0}) $q\binom{a}{2}$. We use Lemma~\ref{l. cokernel (a,b)-geproci} to compute this dimension directly.
Indeed, since $d+q-2=aq-2=a(q-1)+ (a-2)< a(q-1)+ a$ from Lemma~\ref{l. cokernel (a,b)-geproci}(b) we have 
\[\dim coker(\times \ell)=\sum_{i=0}^{q-1}\binom{d+q-a(q-1)}{2}=q\binom{a}{2}.\qedhere \]
\end{proof}

\section{The non-Lefschetz locus}\label{sec: non-Lefschetz locus}

For an artinian graded algebra $R/I$ possessing the WLP, it is of interest to study the non-Lefschetz locus, i.e. the locus of linear forms for which multiplication  does not have maximal rank. This was introduced (in this context) in \cite{BMMN}. It was shown that the non-Lefschetz locus actually  has a scheme structure coming  from the ideal of maximal minors of a homogeneous matrix of linear forms. However, one can also study the  radical of this  ideal, giving the locus of forms for which maximal rank fails but ignoring any possible non-reduced structure. In this section we focus on this  latter point of view.

So, let $X$ be an $a \times b$ grid, $a \leq b$, and let $\{ \ell_1,\dots,\ell_{ab}\}$ be the linear forms dual to the points of $X$. Let $A = R/I = R/(\ell_1^d,\dots,\ell_{ab}^d)$.

There are (at first glance) two situations where we know that $A$ has the WLP, and in all the other cases we have either proved or given evidence that WLP fails. Namely, we know that if $a=b$ then $A$ has the WLP if and only if either $1 \leq d \leq a-1$ (Corollary \ref{low powers, square grid}) or $d$ is a multiple of $a-1$ (Theorem \ref{t. WLP fails} and Theorem \ref{t. WLP holds}). If $a < b$, we know that for $1 \leq d \leq a-1$, $A$ has the WLP (Corollary  \ref{low powers, square grid}) and furthermore that the ideal $I$ coincides with the ideal coming from a square grid. In all other cases we will conjecture that WLP fails (Conjecture~\ref{conj}). So without loss of generality  we can assume that $X$ is a square grid, but we will have to consider the cases $1 \leq d \leq a-1$ and $d = q(a-1)$ separately.

\begin{remark} \label{enough}
    We know that for a general linear form $\ell$, the multiplication $[A]_{t-1} \rightarrow [A]_t$ has maximal rank, and that the cokernel has dimension
    \[
    \dim [R/(I,\ell)]_t = \dim [I_{\bar X}^{m}]_t,
    \]
    where $\bar X$ is the projection of $X$ to a general plane and $m$ is a positive integer depending on $d$ and $t$. Furthermore, $\bar X$ is a complete intersection of type $(a,a)$. 

Since the Hilbert functions of any two complete intersections of type $(a,a)$ are the same, when we specialize $\ell$ to find the non-Lefschetz locus, we can focus on the projections (from a point dual to $\ell$) whose images are not complete intersections of type $(a,a)$. Thus, we focus on the ``non-CI locus", although we do not claim that these two loci are the same. (A projection might fail to be a complete intersection but still have the right Hilbert function). We just observe that the ``non-CI locus" contains the non-Lefschetz locus.

So we have an $a \times a$ grid $X$ consisting of $a^2$ points. Let us now write these as $\{P_{1,1},\dots,P_{a,a} \}$. These lie on a quadric $\mathcal Q$ and are cut out by $a$ lines $\lambda_1,\dots,\lambda_a$ in one ruling and $a$ lines $\mu_1,\dots,\mu_a$ in the other ruling. Let $\Lambda_{i,j}$ be the plane spanned by $\lambda_i$ and $\mu_j$. Note that 

\begin{itemize}

\item $\Lambda_{i,j}$ is the tangent plane to $\mathcal Q$ at $P_{i,j}$.

    \item $\Lambda_{i,j}$ contains all the points of $X$ on $\lambda_i$ and all the points of $X$ on $\mu_j$ and no other points of $X$.

    \item $\Lambda_{i,j}$ contains any line joining a point of $\lambda_i$ and a point of $\mu_j$.

    \item Every  line joining two points $P_{i,j}$ and $P_{p,q}$ of $X$ lies on either two of the planes (if $i \neq p$ and $j \neq q$) or $a$ of them (otherwise).
\end{itemize}

Putting it together, we observe that the projection $\pi_P(X)$ of $X$ from a point $P \in \PP^3$ is a complete intersection (namely the intersection of the $\pi_P(\lambda_i)$ and the $\pi_P(\mu_j)$) if and only if $P \notin \Lambda_{i,j}$ for any $1 \leq i \leq a, \ 1 \leq j \leq a$. So to find our non-Lefschetz locus, it suffices to look at points on the $\Lambda_{i,j}$.

\end{remark}

\begin{proposition}
    Let $X$ be an $a \times a$ grid and let $I$ be the ideal $(\ell_1^d,\dots,\ell_{a^2}^d)$ in the notation above. If $d \geq a$ we write $d = q(a-1)$.

    \begin{itemize}
        \item[(a)] If $d \leq a-1$ then the non-Lefschetz locus is empty.

         \item[(b)] If $d = 2(a-1)$ then the non-Lefschetz locus is the union  of the $2a$ grid lines and all other lines joining (exactly) two grid points.

        \item[(c)] If $d = q(a-1)$ for $q \geq 3$ then the non-Lefschetz locus is the union of the $a^2$ planes~$\Lambda_{i,j}$.

    \end{itemize}
\end{proposition}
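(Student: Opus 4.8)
The plan is to treat all three cases through one mechanism: a cokernel formula valid for an \emph{arbitrary} linear form, combined with the reduction already made in Remark~\ref{enough} and with upper semicontinuity. The starting point is to extend Lemma~\ref{l. cokernel (a,b)-geproci}(a) to every $P$, not just a general one. Using the sequence~\eqref{std seq} and Macaulay duality one has $\dim\coker\bigl(\times\ell:[A]_{d+t-1}\to[A]_{d+t}\bigr)=\dim[I_X^{(t+1)}\cap\wp^{d+t}]_{d+t}$, and here the forms in $[\wp^{d+t}]_{d+t}$ are exactly the pullbacks $\pi_P^\ast(G)$ of degree $d+t$ forms $G$ on $\mathbb{P}^2$; such a pullback vanishes to order $t+1$ along $X$ precisely when $G$ vanishes to order $t+1$ at every point of the reduced image $\overline X=\pi_P(X)$, two grid points collinear with $P$ contributing a single condition. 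Thus, for \emph{every} $P$, the cokernel equals the dimension of the relevant symbolic power of $I_{\overline X}$ in degree $d+t$, exactly as in Lemma~\ref{l. cokernel (a,b)-geproci}(a); when $\overline X$ is a complete intersection of type $(a,a)$ this is the generic value, in general it can only be larger, and it is upper semicontinuous in $P$. Semicontinuity is what will let me pass from the generic point of a stratum of $\bigcup\Lambda_{i,j}$ to the whole stratum.

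For part (a) I would argue by symmetry rather than by computation. For $d\le a-1$ Theorem~\ref{Gor gen} gives $I=(Q^{d-1})^\perp$, so $I$ depends only on $Q$; equivalently, by Lemma~\ref{l. in a grid is enough} every $a\times a$ grid on $Q$ yields the same ideal. Hence the non-Lefschetz locus, regarded in the space of linear forms, is closed and invariant under the subgroup $\mathrm{PO}(Q)\subset\mathrm{PGL}_4$ stabilising $Q$. Over $\mathbb{C}$ this group has exactly two orbits on $\mathbb{P}^3$, the quadric and its complement, so the only proper closed invariant subsets are $\emptyset$ and $Q$. Corollary~\ref{low powers, square grid} shows the locus is proper, and a general point of $Q$ lies on no $\Lambda_{i,j}$, hence projects to a complete intersection, hence is a Lefschetz element by semicontinuity; so the locus is neither all of $\mathbb{P}^3$ nor $Q$, and must be empty.

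For parts (b) and (c) I would test the critical injective map of the proof of Theorem~\ref{t. WLP holds}, $\times\ell:[A]_{d+q-3}\to[A]_{d+q-2}$, whose generic cokernel is $q\binom a2=\dim[I_{\overline X}^{(q-1)}]_{aq-2}$, stratifying $\bigcup\Lambda_{i,j}$ as in Remark~\ref{enough}. If $P$ lies on exactly one plane $\Lambda_{i,j}$, the $2a-1$ grid points on $\lambda_i\cup\mu_j$ project onto a single line $L$, the remaining $(a-1)^2$ points are the projection of the complementary $(a-1)\times(a-1)$ subgrid \emph{from a point off all of its own tangent planes}, hence a complete intersection of type $(a-1,a-1)$ with images off $L$ and all distinct. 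Since any curve of degree $aq-2$ with a point of multiplicity $q-1$ at each of the $2a-1$ points of $L$ meets $L$ in $(q-1)(2a-1)>aq-2$ points counted with multiplicity, it must contain $L$; peeling off $L$ reduces the count to a residual system on that smaller complete intersection. For $q=2$ a single peel leaves the simple subgrid points in degree $2a-3$, whose dimension is exactly the CI value $a^2-a$, so a one-plane point remains Lefschetz; a collision of images — which by Remark~\ref{enough} happens precisely when $P$ lies on a grid line or on a secant through two grid points, i.e.\ on the intersection of two of the planes — deletes a point of $\overline X$ and strictly raises $\dim[I_{\overline X}]_{2a-2}$, so WLP fails. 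This gives (b). For $q\ge 3$ the residual system after peeling still carries multiplicity $\ge q-2$ on the subgrid and has dimension strictly larger than $q\binom a2$ already at a one-plane point; semicontinuity then propagates failure to every point of every plane, giving (c).

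The main obstacle is the exact residual bookkeeping. For (b) I must certify that a one-plane point is genuinely Lefschetz, which means checking maximal rank of \emph{both} boundary maps there: the injective map (cokernel must equal $a^2-a$, via the single peel of $L$ onto the subgrid complete intersection) and the surjective map $[A]_{d+q-2}\to[A]_{d+q-1}$ (cokernel must vanish, via a double peel $L^2$). For (c) the delicate direction is the \emph{strict} inequality, where after peeling the residual scheme is a mixture of the $(a-1,a-1)$ complete-intersection fat points off $L$ and simple points on $L$, and I must extract the positive excess over $q\binom a2$ cleanly. Finally I must confirm that the injective map at degree $d+q-2$ (together with the surjective map at $d+q-1$) controls the entire locus — that no other degree enlarges it — which follows from Remark~\ref{r. surjectivity} and the fact that the generic cokernel of the surjective map is $0$.
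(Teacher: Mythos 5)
Your architecture matches the paper's (reduce to $\bigcup\Lambda_{i,j}$ via Remark~\ref{enough}, then treat general plane points and the lines separately, and control all degrees from the two boundary maps via Remark~\ref{r. surjectivity} and duality), and your extension of the cokernel formula to arbitrary $P\notin X$ is correct and is used implicitly by the paper too. But in three places you take a genuinely different route. For (a) the paper argues that \emph{every} nonzero form works: $R/I$ is compressed Gorenstein of even socle degree, so $\times\ell$ is injective in the first half because $R$ is a domain and surjective in the second half by Gorenstein duality; your $\mathrm{PO}(Q)$-orbit argument is valid but still requires one tangent-type form to be Lefschetz, and ``by semicontinuity'' is not the right justification for that step (what you need is that a CI projection of the correct type has the same Hilbert function data as the generic one, i.e.\ Remark~\ref{enough}) — the paper's route avoids this entirely. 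For the lines in (b), the paper shows failure of \emph{surjectivity} in degree $2a-1$ by exhibiting explicit curves double along the image; your argument — a collision drops $|\overline X|$ to at most $a^2-1$, so $\dim[I_{\overline X}]_{2a-2}\ge\binom{2a}{2}-(a^2-1)=a^2-a+1>\Delta h_A(2a-2)=a^2-a$, killing injectivity — is shorter and correct. For (c) the paper again works on the surjective side, building a degree-$(aq-1)$ curve of multiplicity $q$ at every image point (the pencil member $\gamma$ through $\pi_P(P_{1,1})$ plus $(q-3)$ copies of a degree-$a$ union of lines), whereas you work on the injective side and defer the key inequality. That deferred step does close, and cleanly: Bezout forces exactly one peel of $L$ since $(q-1)(2a-1)-(aq-2)=(a-1)(q-2)+1>0$, and the conditions count for the residual system of degree $aq-3$ gives
\[
\binom{aq-1}{2}-(2a-1)\binom{q-1}{2}-(a-1)^2\binom{q}{2}\;=\;q\binom a2+(a-1)(q-2),
\]
an excess of $(a-1)(q-2)$ over $\Delta h_A(aq-2)=q\binom a2$, which is positive exactly for $q\ge3$ and zero for $q=2$ (consistent with your (b)). So your plan is sound and in places simpler than the paper's, but as written part (c) is a strategy rather than a proof at its most delicate point; with the peel-and-count computation above inserted, it becomes complete.
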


\begin{proof}
    For (a), we saw that in this situation $R/I$ is a compressed Gorenstein algebra with even socle degree $2(a-1)$. Thus $R/I$ coincides with $R$ in the ``first half", i.e. up to degree $a-1$. Since $R$ is an integral domain, $\times \ell$ has no kernel in this range no matter what $\ell$ is. Since $R/I$ is Gorenstein, duality  provides surjectivity for the ``second half" of the Hilbert function. So from now on we can assume $q \geq 2$.

    For (b) and (c), recall that we have $d = q(a-1)$ and that we showed in Theorem \ref{t. WLP holds} that the peak for the Hilbert function  comes in degree $aq-2$.

We first study the behavior of projections $\pi_P$ from a general point $P$ of one of the planes $\Lambda_{i,j}$. Our goal will be to show that when $q = 2$, the linear form dual to $P$ gives a multiplication of maximal rank both from degree $aq-3$ to $aq-2$ (injectivity), and from degree $aq-2$ to $aq-1$ (surjectivity), while for $q \geq 3$ it fails maximal rank (we will only check the failure of surjectivity since that is enough).

Without loss of generality, let $P$ be a general point in the plane $\Lambda_{1,1}$ (the other planes behave in an identical way). The projection $\pi_P$ to a general plane maps the points $P_{1,j}$ and $P_{i,1}$ into a line $\overline{\lambda}$ and the other points into an $(a-1)\times (a-1)$ complete intersection of $\mathbb P^2$. 

We first consider the case $q \geq 3$. From Remark \ref{r. c=0} we know that $\Delta h_A(d+q-1)<0$, so it suffices to show that $\dim[I_{\pi_P(X)}^q]_{d+q-1}=\dim[I_{\pi_P(X)}^q]_{aq-1}>0.$ 
We proceed by induction.

The points $\pi_P(P_{i,j})$ with $i,j>1$ define a pencil of curves of degree $a-1$ in $\mathbb P^2$, so let $\gamma$ be the curve of such pencil containing $\pi_P(P_{1,1}).$
Then the curve  $\vartheta=\pi_P(\lambda_2\cup \cdots\cup \lambda_a\cup\mu_2\cup \cdots\cup \mu_a)\cup 2{\overline \lambda}\cup \gamma$ has degree $3a-1$ and
vanishes at the points of $\pi_P(X)$ with multiplicity 3.

If $q=3$ then we are done (we showed the existence of a curve of degree $3a-1 = aq-1)$.
Now assume $q>3.$ Since $\vartheta$ vanishes at each point of $\pi_P(X)$ to multiplicity 3, it is enough to show that there is a curve of degree $(aq-1) - (3a-1) = a(q-3)$ with multiplicity $q-3$ at each point of $\pi_P(X)$. If $q-3 = 1$, we have the curve $C = \pi_P(\lambda_2 \cup \dots \cup \lambda_a) \cup \bar{\lambda}$. If $q-3=2$, we take the curce $2C$, and in general, we have $(q-3)C$ and we are done.

We have shown that for $P \in  \Lambda_{i,j}$ a general point, and $q \geq 3$, $P$ is in the non-Lefschetz locus. By Remark \ref{enough}, this proves (c).

Finally we prove (b). So assume $d = 2(a-1)$. First let $P$ be a general point of $\Lambda_{1,1}$. We consider
\[
\dim [I_{\pi_P(X)}^2]_{2a-1}.
\]
Suppose $F$ were a curve in this  linear system. Every line $\pi_P(\lambda_i) \ (2 \leq i \leq a)$ and  every line $\pi_P(\mu_j) \ (2 \leq j \leq a)$, together with $\bar \lambda$, is a component of $F$. This curve vanishes to multiplicity 2 at every  point of $\pi_P(X)$ except $\pi_P(P_{1,1})$. But we have already reached degree $2a-1$, so this  dimension is 0. Thus $\Lambda_{1,1}$ (and similarly $\Lambda_{i,j}$) does not lie in the non-Lefschetz locus coming from surjectivity. 

However, we also  have to check injectivity for the dual to a general point in $\Lambda_{1,1}$. For a general projection a quick computation gives that the dimension of the cokernel of the multiplication is 
\[
\dim [I_{\pi_P(X)}]_{2a-2} = 2 \cdot \binom{a-2+2}{2} = 2 \binom{a}{2}.
\]
For a general $P \in \Lambda_{1,1}$, the image contains $2a-1$ points on $\bar \lambda$ so we seek the dimension of the degree $2a-3$ component of the ideal of a complete intersection of type $(a-1,a-1)$, which one quickly computes is also $2 \binom{a}{2}$. Since we saw that these two degrees are the only relevant ones to check WLP, this shows that $\Lambda_{i,j}$ is not contained in the non-Lefschetz locus.

Finally we have the case where $q = 2$ and $P$ is a general point on the line joining two points of $X$. We want to show that surjectivity fails from degree $2a-2$ to $2a-1$, so we have to show that $\dim [I_{\pi_P(X)}^2]_{2a-1} > 0$. 

First suppose $P$ is a general point on one of the $\lambda_i$ or $\mu_j$ (say $\lambda_1$). Then $\pi_P(X)$ lies on the curve $C = \pi_P(\mu_1 \cup \dots \cup \mu_a)$, which is a union of $a$ concurrent lines ($\lambda_1$ is collapsed to a point $A$). It is the union of the complete intersection of $C$ with $\pi_P(\lambda_2 \cup \dots \cup \lambda_a)$ and the point $A$. The union of these $2a-1$ lines are at least double at each point of  $\pi_P(X)$ so we are done.

Now suppose that $P$ is a general point on a line $\alpha$ joining two points of $X$ that are not on a common ruling line. Note that $\alpha$ contains exactly  two points of $X$, which are collapsed to the same point. Without loss of generality say $\alpha$ is the line joining $P_{1,2}$ and $P_{2,1}$. Let $E = \pi_P(P_{1,2}) = \pi_P(P_{2,1})$. Note that $\pi_P$ collapses $\mu_2$ and $\lambda_2$ to the same line, as well as collapsing $\lambda_1$ and $\mu_1$ to the same line. So $\pi_P(X)$ (as a set) is  a complete intersection $W$ of type $(a-1,a-1)$. Then $\dim[I_W^2]_{2a-1} = 3 \cdot 3 = 9$, so again surjectivity fails. 
\end{proof}

\section{Open questions and partial answers}\label{sec. conj non square grids}

In this section we propose some open questions coming from the work in this paper, together with some discussion. 

\begin{enumerate}
    \item Is it true that in the non-square case, all remaining cases fail the WLP? More precisely, we formulate the following conjecture.  

\begin{conjecture}\label{conj}
Let $X$ be an $a\times b$ grid, with $b> a\ge 2$. For any integer $d\ge a$, set $I=\Lambda_{X,d}$ and $A=R/I$. Then $A$ fails to have the WLP.
\end{conjecture}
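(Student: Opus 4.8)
The plan is to prove, for each $d\ge a$, that $\times\ell$ fails maximal rank in a single well-chosen degree, and I expect the governing mechanism to be a failure of \emph{surjectivity} just past the peak of the Hilbert function of $A$. Fix a degree $s$ and set $m=s-d+1$. By Lemma \ref{l. cokernel (a,b)-geproci} the cokernel of $\times\ell\colon[A]_{s-1}\to[A]_s$ equals $\dim[I_{\overline X}^{\,m}]_s$, the degree-$s$ value of the Hilbert function of the $m$-th power of the complete intersection $I_{\overline X}=(f,g)$ of type $(a,b)$ in $\mathbb P^2$. The least-degree element of $(f,g)^m$ is $f^m$, of degree $am$, so this cokernel is nonzero precisely when $am\le s$, that is, when $s\le a(d-1)/(a-1)$. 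On the other hand, Macaulay duality gives $\dim[A]_s=\dim[I_X^{(m)}]_s$, and since $Q^{\,d-1}\in[I_X^{(d-1)}]_{2d-2}$ while the initial degree of $I_X^{(m)}$ is $2m$, the top nonzero degree of $A$ is $2d-2$; hence $h_A$ is eventually strictly decreasing. The decisive feature is an asymmetry: the right endpoint $a(d-1)/(a-1)$ of the range of degrees with nonzero cokernel depends only on $a$, whereas enlarging $b$ increases the number of generators (there are $ab$ of them, all in degree $d$) and so pushes the peak of $h_A$ to the left.

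Concretely I would proceed in two steps. First, record that the cokernel of $\times\ell$ is nonzero in \emph{every} degree $d\le s\le a(d-1)/(a-1)$; this is immediate from the paragraph above together with the resolution of powers of a complete intersection in Lemma \ref{pwrs of ci}. Second, show that for $b>a$ the peak of $h_A$ occurs strictly before $a(d-1)/(a-1)$, i.e. produce an integer $s_0\le a(d-1)/(a-1)$ with $\Delta h_A(s_0)<0$. Granting both, at $s=s_0$ we have $\dim\coker-\dim\ker=\Delta h_A(s_0)<0$ with $\dim\coker>0$, so both the kernel and the cokernel of $\times\ell\colon[A]_{s_0-1}\to[A]_{s_0}$ are nonzero and maximal rank fails. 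This mechanism is correctly inert in the square case: when $a=b$ and $(a-1)\mid d$ one checks that $\lfloor a(d-1)/(a-1)\rfloor$ is exactly the peak degree $aq-2$, where $\Delta h_A=q\binom a2>0$, in agreement with Theorem \ref{t. WLP holds}. For small $d$ one does not even need the peak: the map $[A]_{d-1}\to[A]_d$ already fails, the cokernel exceeding $\max(0,\Delta h_A(d))$ by a binomial coming from a Koszul syzygy term with negative argument, which recovers the failures for $d=a$ and $d=b$ quoted in the Introduction. The surjectivity argument above is aimed precisely at the large $d$ that this leaves open.

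The step I expect to be the main obstacle is the computation of $\Delta h_A$ near $s_0\approx a(d-1)/(a-1)$. By duality this is the Hilbert function of the symbolic power $I_X^{(m)}$ with $m\approx (d-1)/(a-1)$; unfortunately, for $b>a$ the degree $s_0$ falls \emph{below} the threshold $bm-1$ beyond which Lemma \ref{l. indep cond} guarantees that $X^{(m)}$ imposes independent conditions, so the clean formula of Corollary \ref{c. no syzygies in low degrees} does not apply. This is exactly the phenomenon that the ``no syzygies'' range and the ``nonzero cokernel'' range overlap only when $a=b$. The route around it is to compute $h_{I_X^{(m)}}$ from the quadric directly: iterating the exact sequence \eqref{new ses} writes $h_{I_X^{(m)}}(s)=\sum_{j} h_{I_{Y_j|Q}}(s-2(m-j))$, and each term is controlled by the bigraded first difference $\Delta h_{Z_j}$ of Remark \ref{r.P1xP1}, supported on the rectangle $[0,aj-1]\times[0,bj-1]$. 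Summing these contributions yields an explicit, if unwieldy, expression for $\Delta h_A(s_0)$, and the conjecture reduces to showing that it is negative. The genuine difficulty is uniformity: the floors defining $m$, the peak, and the rectangle boundaries interact, and turning the bigraded bookkeeping into a single inequality valid for all $b>a\ge2$ and all $d\ge a$ is the crux of the problem.
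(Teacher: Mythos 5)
This statement is Conjecture \ref{conj}: the paper does not prove it, and neither does your proposal. What you have written is, almost item for item, the paper's own proposed line of attack. Writing $d=(a-1)q'+r'$ with $1\le r'\le a-1$, your critical degree $\lfloor a(d-1)/(a-1)\rfloor$ equals $d+q'-1$, which is exactly the degree singled out in Remark \ref{r. dim cokernel}; the positivity of the cokernel there is the same computation via Lemma \ref{l. cokernel (a,b)-geproci}(b); and your route to $\Delta h_A$ --- iterating the exact sequence \eqref{new ses} and controlling the pieces by bigraded Hilbert functions on $\mathbb P^1\times\mathbb P^1$ --- is precisely Remark \ref{r. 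Macaulay + ses}, which the paper carries out only for the single example of a $3\times 6$ grid with $d=5$ and explicitly calls ``arduous'' in general. The step you defer to the end --- a bound on $\Delta h_A(d+q'-1)$ uniform in $b>a\ge 2$ and $d\ge a$ --- is not a technical loose end; it is the entire open content of the conjecture, and your proposal contains no argument for it. So there is a genuine gap, and it is the same gap the authors acknowledge.

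Two smaller points on the shape of your plan. First, insisting on a degree $s_0$ with $\Delta h_A(s_0)<0$ (failure of surjectivity) is both stronger than necessary and sometimes false: since the cokernel in degree $d+q'-1$ is positive, it suffices to prove $\Delta h_A(d+q'-1)<\sum_{i=0}^{q'}\binom{r'+1-(b-a)i}{2}$, which forces a nonzero kernel and rules out maximal rank even when $\Delta h_A\ge 0$. For instance, for $a=2$, $b=3$, $d=2$ one gets $\Delta h_A(2)=0$ with cokernel of dimension $1$, so WLP fails through injectivity, not surjectivity; your criterion would not detect this. Second, the difficulty is not uniform over all cases: when $q'\le q$ (with $d=(b-1)q+r$, $0\le r<b-1$), the degree $d+q'-1$ lies in the range of Corollary \ref{c. no syzygies in low degrees}, so $\Delta h_A(d+q'-1)=\binom{d+q'+1}{2}-ab\binom{q'+1}{2}$ in closed form and the required inequality is an explicit polynomial one. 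The genuinely open case is $q<q'$, where the critical degree falls below the threshold $bq'-1$ of Lemma \ref{l. indep cond} and the symbolic-power Hilbert function has no closed formula --- exactly the obstacle you identify, but do not overcome.
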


\begin{remark}\label{r. dim cokernel}
    Let $q',r'$ be the integers such that $d=(a-1)q'+r'$ where $q'\ge 1$ and $1\le r'\le a-1$. 
    Let $\ell$ be a general linear form. 
Experimentally the the map
    $$\times  \ell: A_{d+q'-2}\to A_{d+q'-1}$$
fails to have maximal rank.
From Lemma \ref{l. cokernel (a,b)-geproci} we have 
    $$\dim coker(\times \ell)=\sum_{i=0}^{q'}\binom{d+q'+1-aq'-(b-a)i}{2}=\sum_{i=0}^{q'}\binom{r'+1-(b-a)i}{2}>0.$$
\end{remark}

\begin{remark}
    To prove the conjecture one has to show that 
$$\Delta h_A(d+q'-1)<\sum_{i=0}^{q'}\binom{r'+1-(b-a)i}{2}.$$ 
    
Let $q,r$ such that $d=(b-1)q+r$ with $0\le r<b-1$.
 Then we have 
$$d=(b-1)q+r=(a-1)q'+r'$$
where $1\le r'\le a-1$ thus $$d-1=(a-1)q'+(r'-1)$$  
with $0\le r'-1<a-1$. Therefore we get 
$q\le q'$
If $d+q'-1 \le d+q-1$, i.e. $q=q'$ then we are in the range of Corollary \ref{c. no syzygies in low degrees}.  
Moreover, if $q\neq q'$ then it seems true that $\Delta h_A(d+q'-1)\le 0$.
\end{remark}

\begin{remark}\label{r. Macaulay + ses}
    By Macaulay duality 
$$\Delta h_A(d+q'-1)=h_A(d+q'-1)- h_A(d+q'-2)=\dim[I_X^{(q')}]_{d+q'-1}-\dim[I_X^{(q'-1)}]_{d+q'-2}.$$
Moreover from the short exact sequence in \eqref{new ses} we get
$$\dim[I_X^{(\alpha)}]_{t}=\dim[I_X^{(\alpha-1)}]_{t-2}+\dim[I_Z^{(\alpha)}]_{t}
$$
where $Z$ is a set of points in $\mathbb P^1\times \mathbb P^1$ whose Segre embedding is $X.$
The above formula can be applied recursively to our case to get
$$\dim[I_X^{(q')}]_{d+q'-1}=\dim[I_X]_{d-q'+1}+\dim[I_Z^{(2)}]_{d-q'+3}+\cdots+ \dim[I_Z^{(q'-1)}]_{d+q'-3}+\dim[I_Z^{(q')}]_{d+q'-1} $$
and an  analogous formula for $\dim[I_X^{(q'-1)}]_{d+q'-2}$. 
The Hilbert function of $k[\mathbb P^1\times \mathbb P^1]/I_Z^{(t)}$ is known, see for instance Corollary 2.3 in \cite{GV}, but it is not in a closed formula, which makes the computation of the general case arduous.
\end{remark}
We show how to prove the failure of WLP for the algebra coming from a non-square grid in  specific cases following the above remarks.
\begin{example} 
Let $X$ be a $3\times 6$ grid and $d=5$ and let $\ell$ be a general linear form. Let $I=(\ell^5\ |\ \ell^{\vee}\in X)$ and $A=R/I.$

Since $5=1\cdot 3+2$, according to the conjecture we have to look at $\times \ell : A_{5}\to A_{6}.$
From Remark \ref{r. dim cokernel} the dimension of the cokernel of this map is $1$.
However from Remark \ref{r. Macaulay + ses} we can compute
$$h_A(6)-h_A(5)=\dim [I_X^{(2)}]_6-\dim [I_X]_5=\dim [I_X]_4+\dim [I_Z^{(2)}]_6-\dim [I_X]_5=17+10-38=-11.$$
This shows that $A$ fails the WLP.
\end{example}

\item One of the key facts about grids that was used in this paper was that a general projection is a complete intersection. But grids form a small subset of all sets in $\PP^3$ with this  property (see for instance \cite{POLITUS1}). Most such sets do not lie on a quadric surface, so much of our machinery does not apply. Nevertheless, the following is an interesting problem.

\begin{question}
 Let $X$ be an $(a,b)$-geproci set. When does  $R/\Lambda_{X,d}$ have   the WLP?
\end{question}

The question above is posed in  its broadest generality. The answer might depend on the combinatorics of $X$. 
    Partial information to address the study of the WLP are contained in Lemma \ref{l. cokernel (a,b)-geproci}. In this case, what has to be investigated is the number of conditions imposed by the scheme of fat points $X^{(m)}$ to the vector space $R_{t}$ for suitable integers $m$ and $t$.  Experimental evidence suggests strong connections between a more general $(a,b)$-geproci set and an $a \times b$ grid, from the point of view of the WLP for the corresponding algebras $R/\Lambda_{X,d}$.

\medskip

\item The advantage of studying grids in this paper is that the general projection is a complete intersection, so the Hilbert functions of the powers are easy to compute. However, the Hilbert function of the symbolic power of $X$ was not so easy to compute. In this question we reverse the roles.

\begin{question}
    If $X \subset \PP^3$ is a complete intersection, can we say anything  about the WLP for $R/\Lambda_{X,d}$?
\end{question}

\noindent Now we know very little about the general projection of $X$, but the Hilbert function of the symbolic powers of $I_X$ are easy to compute.

\medskip

\item The Strong Lefschetz Property (SLP) for an artinian algebra $A$ is defined by the property that not only does $\times \ell$ have maximal rank in all degrees for a general linear form $\ell$ (which  is WLP), but in addition $\times \ell^k$ does as well. 
\begin{question}
    In all of the above situations, does $R/ \Lambda_{X,d}$ have the SLP?
\end{question}

\medskip

\item Finally, one can ask if the methods of this paper can be a starting point to studying ideals generated by {\it mixed} powers of linear forms when the forms again are dual to the points on a grid. 
\medskip
 \item For a set of points $X$ in $\mathbb P^n$ is the behavior of the WLP of $\Lambda_{X,d}$ eventually stable? We formalize this problem in the following question.  \begin{question}Fix $n$ and
    let $X$ be a set of points in $\mathbb P^n$. 
    Construct the sequence $B_X=(b_1, b_2, b_3, \ldots)$
    as follows: $b_d=1$ if $\Lambda_{X,d}$ has the WLP, otherwise it is 0.
    Also define from $B_X$ the corresponding real number $b_X=0.b_1b_2b_3\ldots$.  
    Is $b_X$ a rational number? 
    Which numbers $b$ constructed from a sequence of $0$ and $1$ as above are such that $b=b_X$ for some set of points in $\mathbb P^n?$    
\end{question}
\noindent For any $X\subseteq \mathbb P^2$, from \cite{SS1}, we have that $b_X=0.\overline{1}.$  The known results, for instance on general points and grids, show that in $\mathbb P^3$ there are more possibilities. Thus, when do two different sets of points $X$ and $X'$ have $b_X=b_{X'}$?
\end{enumerate}

\end{document}